\theoremstyle{plain}
\newtheorem{definition}{Definition}
\newtheorem{lemma}{Lemma}
\newtheorem{proposition}{Proposition}
\newtheorem{remark}{Remark}
\newtheorem{theorem}{Theorem}
\numberwithin{equation}{section}
\begin{document}
\title[Smoothness of orbital measures]{The smoothness of convolutions of
orbital measures on complex Grassmannian symmetric spaces}
\author{Sanjiv Kumar Gupta}
\address{Dept. of Mathematics and Statistics\\
Sultan Qaboos University\\
P.O.Box 36 Al Khodh 123\\
Sultanate of Oman}
\email{gupta@squ.edu.om}
\author{Kathryn E. Hare}
\address{Dept. of Pure Mathematics\\
University of Waterloo\\
Waterloo, Ont.,~Canada\\
N2L 3G1}
\email{kehare@uwaterloo.ca}
\thanks{This research is supported in part by NSERC 2016-03719 and by Sultan
Qaboos University. The authors thank Acadia University for their hospitality
when this research was done. }
\subjclass[2000]{Primary 43A90, 43A85; Secondary 58C35, 33C50}
\keywords{orbital measure, spherical functions, complex Grassmannian
symmetric space, absolute continuity}
\thanks{This paper is in final form and no version of it will be submitted
for publication elsewhere.}

\begin{abstract}
It is well known that if $G/K$ is any irreducible symmetric space and $\mu
_{a}$ is a continuous orbital measure supported on the double coset $KaK,$
then the convolution product, $\mu _{a}^{k},$ is absolutely continuous for
some suitably large $k\leq \dim G/K$. The minimal value of $k$ is known in
some symmetric spaces and in the special case of groups or rank one
symmetric spaces it has even been shown that $\mu _{a}^{k}$ belongs to the
smaller space $L^{2}$ for some $k$. Here we prove that this $L^{2}$ property
holds for all the compact, complex Grassmanian symmetric spaces, $%
SU(p+q)/S(U(p)\times U(q))$. Moreover, for the orbital measures at a dense
set of points $a$, we prove that $\mu _{a}^{2}\in L^{2}$ (or $\mu
_{a}^{3}\in L^{2}$ if $p=q$).
\end{abstract}

\maketitle

\section{\protect\bigskip Introduction}

Suppose $G$ is a Lie group with Cartan involution $\theta $ and $K$ is the
compact subgroup of $G$ fixed by $\theta $. In a now classical paper, \cite%
{Ra}, Ragozin proved that if the symmetric space, $G/K,$ is irreducible,
then any convolution product of $\dim G/K,$ continuous, $K$-bi-invariant
measures on $G$ is absolutely continuous with respect to the Haar measure on 
$G$. In the case that $G$ is a non-compact group, Ragozin's result was
significantly improved by Graczyk and Sawyer in \cite{GSFunc} who showed
that $rank(G/K)+1$ convolutions would suffice. The building blocks for all $K
$-bi-invariant measures on $G/K$ are the so-called orbital measures, $\mu
_{a},$ supported on the double cosets, $KaK$.\ Building on the work of
Graczyk and Sawyer in \cite{GSFunc}-\cite{GSAust}, the authors in \cite%
{GHJMAA} improved this result by characterizing the convolution products of
orbital measures that are absolutely continuous, for any classical,
non-compact symmetric space.

For the compact symmetric spaces $G_{c}/K$ for $G_{c}$ compact, the authors
in \cite{GHJMAA2} proved that any convolution product of $2rank(G_{c}/K)+1,$
continuous, $K$-bi-invariant measures is absolutely continuous. Any compact
Lie group, $G_{c},$ can be regarded as a compact symmetric space, namely $%
G_{c}\times G_{c}/K$ with $K=\{(g,g):g\in G_{c}\}$. The $K$--bi-invariant,
orbital measures are the $G_{c}$-invariant measures supported on the
conjugacy classes of the group. In this special case, it is known that $\mu
_{a}^{k}$ belongs to the (smaller) space $L^{2}(G_{c})$ if and only if $\mu
_{a}^{k}$ is absolutely continuous (equivalently, belongs to $L^{1}(G_{c})$%
). Moreover, the minimal exponent, $k(a),$ such that $\mu _{a}^{k(a)}\in
L^{2}$ has even been determined for each $a$; see \cite{GHAdv, HJSY, Wr}.
This $L^{1}-L^{2}$ dichotomy was shown to be false in the compact symmetric
space $SU(2)/SO(2)$, although it is still true that $\mu _{a}^{3}\in L^{2}$
for all continuous orbital measures in any rank one symmetric space; see 
\cite{AGP}, \cite{HH}. These $L^{2}$ results were all found by studying the
decay properties of characters or spherical functions.

In \cite{AA}, Al-Hashami and Anchouche studied the analogous problem for the
compact, complex Grassmannian symmetric spaces $G_{c}/K,$ where $%
G_{c}=SU(p+q)$ and $K=S(U(p)\times U(q))$. They proved that for the dense
set of `regular' points $a\in G_{c}$, $\mu _{a}^{k}\in L^{2}$ for a choice
of $k$ which is much smaller than $\dim G_{c}/K$; see (\ref{AA}). They did
this by using the Berezin-Karpelevich formula for the spherical functions
and obtaining rates of decay for these functions at the regular elements.
Using these estimates and the Sobolev embedding theorem, they also proved
that $\mu _{a}^{k}\in C^{s}(G_{c})$ for regular elements $a$ and suitably
large choices of $k,$ depending on $s$.

In this note, we find estimates on the rates of decay for the spherical
functions at \textit{all} points $a\notin $ $N_{G_{c}}(K)$, these being the
points which give rise to the continuous orbital measures $\mu _{a}$.
Obtaining such estimates at non-regular points is quite delicate for then
the singularities of the spherical functions must be understood. With these
estimates we are able to prove that $\mu _{a}^{k}\in L^{2}$ for all $a\notin
N_{G_{c}}(K)$ whenever 
\begin{equation*}
k>\max (p,2(p-q)+3)\text{ if }p>q\text{ and }k>\max (2p,6)\text{ if }p=q.
\end{equation*}%
For the special case of the regular elements $a,$ we show that $\mu
_{a}^{2}\in L^{2}$ if $p>q$ and $\mu _{a}^{3}\in L^{2}$ otherwise. This
significantly improves the work in \cite{AA} as their exponent $k$ was
unbounded in $p$. We also show that $\mu _{a}^{k}\in C^{s}$ for all $a\notin
N_{G_{c}}(K)$ for suitable $k=k(s,a)$.

Although smaller than $\dim G_{c}/K,$ our minimal exponent $k$ is typically
much larger than $2rank(G_{c}/K)+1$. It would be interesting to know what
the sharp exponent is for the $L^{2}$ problem and whether the $L^{1}-L^{2}$
dichotomy holds.

\section{Notation and Preliminaries}

\subsection{Orbital measures and their smoothness}

Suppose $G_{c}/K$ is an irreducible symmetric space, where $G_{c}$ is a
compact Lie group with Cartan involution $\theta $ and $K=\{g\in
G_{c}:\theta (g)=g\}$. Denote its non-compact dual space by $G/K$ and
suppose the Lie algebra of $G$ is $\mathfrak{g}$. We fix the Cartan
decomposition $\mathfrak{g=t\oplus p}$ and the maximal abelian subspace $%
\mathfrak{a}$ of $\mathfrak{p}$ as in \cite{He}. We identify the Lie algebra 
$\mathfrak{g}_{c}$ of $G_{c}$ with the subspace $\mathfrak{g}_{c}\mathfrak{%
=t\oplus ip}$ of the complexified Lie algebra $\mathfrak{g}^{\mathbb{C}}$
and let $A=\exp i\mathfrak{a}\subseteq G$. Always $G_{c}=KAK$. For any $a\in
G_{c},$ the set $KaK$ is called a double coset and has Haar measure zero.

\begin{definition}
Given any $a=\exp (iX)\in A,$ we define the \textbf{orbital measure} $\mu
_{a}$ on $G_{c}$, with support $KaK,$ by%
\begin{equation*}
\int fd\mu _{a}=\int_{K}\int_{K}f(k_{1}ak_{2})dk_{1}dk_{2}
\end{equation*}%
for all continuous functions $f$.
\end{definition}

This probability measure is $K$-bi-invariant. It is continuous (meaning as a
measure on $G_{c}/K$) precisely when $a\notin N_{G_{c}}(K)$ and is purely
singular with respect to Haar measure.

In \cite{Ra}, Ragozin essentially proved the following geometric
characterization.

\begin{proposition}
Let $a_{i}\in A$, $i=1,...,k$. Then $\mu _{a_{1}}\ast \cdot \cdot \cdot \ast
\mu _{a_{k}}$ is absolutely continuous if and only if the product of double
cosets $Ka_{1}K\cdot \cdot \cdot Ka_{k}K$ has non-empty interior in $G_{c}$.
Moreover, if $a_{i}\notin N_{G_{c}}(K)$ and $k\geq \dim (G_{c}/K),$ then $%
\mu _{a_{1}}\ast \cdot \cdot \cdot \ast \mu _{a_{k}}$ is absolutely
continuous.
\end{proposition}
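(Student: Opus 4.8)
The plan is to realize the convolution as a pushforward and reduce everything to a rank computation. Writing $a_i=\exp(iX_i)$, consider the real-analytic map $\Phi\colon K^{k+1}\to G_c$ given by $\Phi(k_0,\dots,k_k)=k_0a_1k_1a_2\cdots a_kk_k$. Since $K$ is compact, the image $P:=Ka_1K\cdots Ka_kK$ is compact, and it is exactly the support of $\mu:=\mu_{a_1}\ast\cdots\ast\mu_{a_k}$; indeed $\mu=\Phi_\ast(\lambda)$, where $\lambda$ is normalized Haar measure on $K^{k+1}$. Thus both assertions come down to deciding when $\Phi$ has a point at which $d\Phi$ is surjective, i.e.\ where $\Phi$ has rank $\dim G_c$.

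For the equivalence I would argue by Sard's theorem together with analyticity. First suppose $\Phi$ is nowhere of full rank. Then every point of $K^{k+1}$ is critical, so by Sard the whole image $P$ is a set of critical values and hence has Haar measure zero; in particular $P$ has empty interior, and $\mu$, being carried by $P$, is purely singular. Conversely, suppose $\Phi$ has rank $\dim G_c$ at some point. Then the full-rank locus is open and nonempty, and there $\Phi$ is a submersion, hence an open map, so $\Phi$ carries a nonempty open set onto an open subset of $G_c$ and $P$ has nonempty interior. Moreover, since $\Phi$ is real-analytic, the locus where the rank drops below $\dim G_c$ is the common zero set of the $\dim G_c\times\dim G_c$ minors of the Jacobian of $\Phi$ in local charts; as not all of these analytic functions vanish identically, this locus is a proper analytic subvariety and so has $\lambda$-measure zero. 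Off it $\Phi$ is a submersion, and the pushforward of an absolutely continuous measure under a submersion is absolutely continuous, so $\mu$ is absolutely continuous. In particular, it suffices to exhibit a single point of full rank to conclude absolute continuity.

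For the ``moreover'', by the equivalence it is enough to show that $\Phi$ attains rank $\dim G_c$ somewhere whenever each $a_i\notin N_{G_c}(K)$ and $k\geq\dim(G_c/K)$. Identifying $T_g G_c$ with $\mathfrak g_c$ by right translation, the image of $d\Phi$ at $(k_0,\dots,k_k)$ is the sum $\mathfrak t+\sum_{j=1}^{k}\mathrm{Ad}(k_0a_1k_1\cdots k_{j-1}a_j)\mathfrak t$. Since $\mathfrak g_c=\mathfrak t\oplus i\mathfrak p$ with $\dim i\mathfrak p=\dim(G_c/K)$, it suffices to choose the $k_j$ so that these $\mathrm{Ad}$-translates of $\mathfrak t$ jointly span $\mathfrak g_c$. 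I would establish this by a dimension-increment argument: if the partial sum $V$ built from the first few factors is still a proper subspace, then one can choose the next free variable so that the newly adjoined space $\mathrm{Ad}(k_0a_1\cdots k_{j-1}a_j)\mathfrak t$ is not contained in $V$, raising the dimension by at least one. Starting from $\mathfrak t$ and adjoining $k$ such factors, at most $\dim(G_c/K)$ strict increments are needed before the sum is all of $\mathfrak g_c$, so $k\geq\dim(G_c/K)$ suffices.

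The hard part will be precisely this strict-increment step: one must show that $\mathrm{Ad}(k_0a_1\cdots k_{j-1}a_j)\mathfrak t\subseteq V$ for \emph{every} admissible choice of the free variable $k_{j-1}\in K$ forces $V=\mathfrak g_c$. This is where the hypothesis $a_j\notin N_{G_c}(K)$ enters: were the new translate always trapped in a proper $V$, averaging over $k_{j-1}$ and exploiting the $\mathrm{Ad}(K)$-invariance of $\mathfrak t$ and of $i\mathfrak p$ would force $\mathrm{Ad}(a_j)\mathfrak t=\mathfrak t$, that is, $a_j\in N_{G_c}(K)$, contrary to assumption. Carrying out this containment analysis carefully --- tracking which directions of $i\mathfrak p$ are actually reached by $\mathrm{Ad}(a_j)$ applied to $\mathfrak t$, and checking that the $K$-conjugates genuinely escape any fixed proper subspace --- is the delicate point of the argument.
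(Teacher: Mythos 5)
A point of context first: the paper does not actually prove this proposition --- it is quoted from Ragozin \cite{Ra} (with an algebraic verification attributed to \cite{GHJMAA}) --- so there is no in-paper argument to compare against; your outline is, in substance, Ragozin's original strategy. The skeleton is sound: realizing $\mu _{a_{1}}\ast \cdots \ast \mu _{a_{k}}$ as $\Phi _{\ast }\lambda $ for $\Phi (k_{0},...,k_{k})=k_{0}a_{1}k_{1}\cdots a_{k}k_{k}$, reducing both absolute continuity and non-empty interior of $Ka_{1}K\cdots Ka_{k}K$ to the existence of a point where $d\Phi $ is surjective (Sard in one direction, the measure-zero rank-degeneracy locus of an analytic map in the other), and computing the image of $d\Phi $ as $\mathfrak{t}+\sum_{j}\mathrm{Ad}(k_{0}a_{1}\cdots k_{j-1}a_{j})\mathfrak{t}$ are all correct, as is the count that $\dim \mathfrak{g}_{c}-\dim \mathfrak{t}=\dim G_{c}/K$ strict increments suffice. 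One caveat: the step ``not all minors vanish identically, hence the degenerate locus is null'' requires $K^{k+1}$ to be connected (otherwise a component on which $\Phi $ is nowhere submersive would contribute a nonzero singular summand even while another component yields an open image); this holds here because $SU(p+q)$ is simply connected, so $K$ is connected, but it should be said.

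The genuine gap is the step you yourself defer, and the sketch you give for it is not quite right as stated: knowing that $\mathrm{Ad}(ka_{j})\mathfrak{t}\subseteq V^{\prime }$ for every $k\in K$ (with $V^{\prime }\supseteq \mathfrak{t}$ proper) does not, by ``averaging,'' directly force $\mathrm{Ad}(a_{j})\mathfrak{t}=\mathfrak{t}$. The missing ingredient is the irreducibility of the symmetric space, i.e.\ that $\mathrm{Ad}(K)$ acts irreducibly on $i\mathfrak{p}$. The correct argument is: the span $U=\sum_{k\in K}\mathrm{Ad}(k)\left( \mathrm{Ad}(a_{j})\mathfrak{t}\right) $ is $\mathrm{Ad}(K)$-invariant; since $\mathrm{Ad}(K)$ preserves the splitting $\mathfrak{g}_{c}=\mathfrak{t}\oplus i\mathfrak{p}$, the projection of $U$ onto $i\mathfrak{p}$ is an $\mathrm{Ad}(K)$-invariant subspace of $i\mathfrak{p}$, hence is $\{0\}$ or all of $i\mathfrak{p}$ by irreducibility. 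In the first case $\mathrm{Ad}(a_{j})\mathfrak{t}\subseteq \mathfrak{t}$, so $\mathrm{Ad}(a_{j})\mathfrak{t}=\mathfrak{t}$ by dimension and $a_{j}\in N_{G_{c}}(K)$ (using connectedness of $K$), contradicting the hypothesis; in the second case $U+\mathfrak{t}=\mathfrak{g}_{c}$, so $U$ cannot lie in a proper $V^{\prime }$ containing $\mathfrak{t}$, which yields the strict increment. Without invoking irreducibility this step fails --- the proposition is false in this form for reducible spaces --- so this is exactly where the standing hypothesis of the section enters and it must be made explicit for the proof to close.
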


Ragozin used geometric methods to show that $Ka_{1}K\cdot \cdot \cdot
Ka_{k}K $ has non-empty interior for $k=$ $\dim G_{c}/K$ if $a_{i}\notin
N_{G_{c}}(K) $ for all $i$. Notice that if $a\in N_{G_{c}}(K),$ then $\left(
KaK\right) ^{k}=Ka^{k}K$ and hence has Haar measure zero. Thus $\mu _{a}^{k}$
is purely singular to Haar measure for all $k$. Ragozin's geometric
characterization was verified using algebraic methods by the authors in \cite%
{GHJMAA} when they improved upon his result, showing that $%
k=2rank(G_{c}/K)+1 $ suffices. This exponent is close to sharp as there are
continuous orbital measures (in some compact symmetric spaces) with $\mu
_{a}^{2rank(G_{c}/K)-1} $ singular; \cite{GHAdv}.

A measure $\mu $ is absolutely continuous with respect to Haar measure if
and only if its density function (or Radon Nikodym derivative) belongs to $%
L^{1}$. If the density function belongs to the properly smaller space $L^{2}$
(or to $C^{s}),$ we will write $\mu \in L^{2}$ (resp., $C^{s}).$ Ragozin's
geometric approach is not helpful in studying the problem of determining if $%
\mu _{a}^{k}\in L^{2}$. Instead, a harmonic analysis approach has been
taken: estimates are made on the rate of decay of the Fourier transform of
the measure and then the Peter Weyl theorem is invoked. This approach has
been applied very successfully when the symmetric space is a compact group
or a rank one compact symmetric space, and is the approach we will take in
this paper to study the $L^{2}$ problem for the complex Grassmannian
symmetric spaces.

\subsection{The symmetric space $SU(p+q)/S(U(p)\times U(q))$}

For the remainder of this paper we will focus on the case of the complex
Grassmannian symmetric space, $G_{c}/K$, where $G_{c}=SU(p+q)$, $%
K=S(U(p)\times U(q))$ and $p\geq q\geq 2$. This compact symmetric space has
rank $q$ and dimension $2pq$. The non-compact dual space is the symmetric
space $G/K,$ where $G$ has Lie algebra $\mathfrak{g}=su(p,q)$. With Cartan
decomposition $\mathfrak{g=t\oplus p}$, we can take as the maximal abelian
subspace $\mathfrak{a}$ the $(p+q)\times (p+q)$ matrices of the form 
\begin{equation*}
X=X(t_{1},...,t_{q})=%
\begin{bmatrix}
0 & 0 & M \\ 
0 & 0 & 0 \\ 
M^{\prime } & 0 & 0%
\end{bmatrix}%
\end{equation*}%
where $M,M^{\prime }$ are $q\times q$ anti-diagonal matrices, with $%
t_{1},...,t_{q}$ and $t_{q},...,t_{1}$ respectively, on the anti-diagonal.
As above, we let $A=\exp $ $i\mathfrak{a}\subseteq G_{c}$. We can identify $%
X(t_{1},...,t_{q})\in \mathfrak{a}$ by $(t_{1},...,t_{q})\in \mathbb{R}^{q}$.

The restricted roots and the highest spherical weights will be very
important in our work. The positive restricted roots can be taken to be 
\begin{equation*}
\{2e_{k},e_{k},e_{i}\pm e_{j}:1\leq i<j\leq q,1\leq k\leq q\}
\end{equation*}%
(where $\{e_{j}\}$ are the usual basis vectors for $\mathbb{R}^{q})$ with
multiplicities $m_{2e_{k}}=1$, $m_{e_{k}}=2(p-q)$ (so these are not present
if $p=q$) and $m_{e_{i}\pm e_{j}}=2$. The restricted roots act on $\mathfrak{%
a}$ in the natural way. We also view the roots as acting on $A$ by the (well
defined) rule $\alpha (\exp iX)\equiv \alpha (X)\func{mod}\pi $. The reader
is referred to \cite{He} for further background information.

It is well known that the normalizer, $N_{G_{c}}(K)\cap A,$ is characterized
by the property that $\exp iX\in N_{G_{c}}(K)\cap A$ if and only if $\alpha
(X)\equiv 0\func{mod}\pi $ for all restricted roots $\alpha $.

An element $X\in \mathfrak{a}$ or $\exp iX\in A$ is said to be\textbf{\
regular} if $\alpha (X)\neq 0\func{mod}\pi $ for each restricted root $%
\alpha ,$ thus $X=X(t_{1},..,t_{q})$ is regular precisely when $t_{j}\neq
\pm t_{k}\func{mod}\pi $ for all $j\neq k$ and $t_{k}\neq 0,\pi /2$ $\func{%
mod}\pi $ for all $k$. Such elements are dense in $\mathfrak{a}$ or $A$
respectively. When $X$ is regular, $\mu _{\exp iX}^{2}$ is absolutely
continuous (whatever the compact symmetric space); \cite{GHgeneric}.

The highest spherical weights are given by 
\begin{equation*}
\Lambda _{sph}=\left\{ \sum_{j=1}^{q}2m_{j}e_{j}:m_{j}\in \mathbb{N}\text{, }%
m_{1}\geq m_{2}\cdot \cdot \cdot \geq m_{q}\geq 0\right\} .
\end{equation*}%
We denote the spherical function corresponding to $\lambda \in \Lambda
_{sph} $ by $\phi _{\lambda }$. Put 
\begin{eqnarray*}
r &=&p-q+1\text{ and} \\
n_{j} &=&m_{j}+q-j,j=1,...,q
\end{eqnarray*}%
so that $n_{1}>n_{2}>\cdot \cdot \cdot >n_{q}\geq 0$. We denote the
normalized Jacobi polynomials by 
\begin{equation*}
\widetilde{P_{n}}(x)=\frac{P_{n}^{(p-q,0)}(x)}{P_{n}^{(p-q,0)}(1)}
\end{equation*}

According to the Berezin-Karpelevich formula (see \cite{BK} or \cite{Ca}) 
\begin{equation}
\phi _{\lambda }(\exp iX(t_{1},...,t_{q}))=\frac{C_{p,q}\det \left( 
\widetilde{P_{n_{j}}}(\cos 2t_{k})\right) _{j,k=1}^{q}}{\tprod\limits_{1\leq
j<k\leq q}(\cos 2t_{j}-\cos 2t_{k})(n_{j}(n_{j}+r)-n_{k}(n_{k}+r))}
\label{formula}
\end{equation}%
where%
\begin{equation*}
C_{p,q}=2^{q(q-1)/2}\prod_{j=1}^{q-1}j!(j+p-q)^{q-j}
\end{equation*}
and the quotient should be understood in the limiting sense if some $\cos
2t_{j}=\cos 2t_{k}$. Of course, this situation occurs precisely if some $%
t_{j}\equiv \pm t_{k}$ mod $\pi $.

An elementary, but useful, observation is that 
\begin{equation}
n_{j}(n_{j}+r)-n_{k}(n_{k}+r)\geq n_{j}+r\text{ when }j<k.
\label{njproperty}
\end{equation}

\section{Decay of Spherical Functions}

The objective of this section is to obtain estimates on the decay of the
spherical functions. We will find estimates that hold for all $a\notin
N_{G_{c}}(K)$ and better estimates for the regular elements.

\begin{theorem}
\label{decay}Suppose $\lambda =\sum_{j=1}^{q}2m_{j}e_{j}$ is a highest
spherical weight and $n_{j}=m_{j}+q-j$.

(i) If $\exp iX\in A\diagdown N_{G_{c}}(K),$ then%
\begin{equation*}
\left\vert \phi _{\lambda }(\exp iX)\right\vert \leq C\left\{ 
\begin{array}{cc}
\prod_{j=1}^{q-1}(n_{j}+1)^{-1} & \text{if }p>q \\ 
\prod_{j=1}^{q-1}(n_{j}+1)^{-1/2} & \text{if }p=q%
\end{array}%
\right.
\end{equation*}%
where $C$ is a constant that depends on $p,q$ and $X$, but not $\lambda $.

(ii) \label{reg}If $\exp iX$ is regular, then there is a constant $%
C=C(p,q,X) $ such that 
\begin{equation*}
\left\vert \phi _{\lambda }(\exp iX)\right\vert \leq
C\prod_{j=1}^{q}(n_{j}+1)^{-p+j-1/2}.
\end{equation*}
\end{theorem}

\begin{remark}
Note that when $X=X(t_{1},...,t_{q})$ is regular, $\cos 2t_{j}\neq \cos
2t_{k}$ for any $j\neq k$. This is very significant as it means we do not
have the complication of having to understand $\phi _{\lambda }(\exp iX)$
through the limiting process. In \cite{AA}, the decay in $\phi _{\lambda
}(\exp iX)$ was studied for this special case.\footnote{%
The authors do not make this assumption explicit in the statement of their
theorem, but the properties of a regular element are used in their proof.}
They obtained the bound, 
\begin{equation*}
\left\vert \phi _{\lambda }(\exp iX)\right\vert \leq
C\prod_{j=1}^{q}(n_{j}+1)^{-p+q/2}.
\end{equation*}
\end{remark}

In our proof, the constants $C$ which appear may vary from one occurrence to
another, but will always be independent of $\lambda $. We will frequently
write $\phi _{\lambda }(X)$ as shorthand for $\phi _{\lambda }(\exp iX)$.
When we say $f\sim g$ for functions $f,g$ defined on $\mathbb{N}$, we mean
there are constants $A,B>0$ such that $Ag(n)\leq f(n)\leq Bg(n)$ for all $n.$

We begin by collecting useful facts about Jacobi polynomials.

\begin{lemma}
\label{basic}The following are well known facts about Jacobi polynomials:

(i) $P_{n}^{(a,b)}(1)=\binom{n+a}{n}\sim (n+1)^{a};$ $%
P_{n}^{(a,b)}(-1)=(-1)^{n}\binom{n+b}{n};$

(ii) $\left\vert P_{n}^{(a,b)}(x)\right\vert \leq \frac{C_{a,b,x}}{\sqrt{n+1}%
}$ when $x\neq \pm 1;$

(iii) $\left\vert P_{n}^{(a,0)}(x)\right\vert \leq C_{x,a}\left\vert
P_{n}^{(a,0)}(1)\right\vert $ so $\left\vert \widetilde{P_{n}}(x)\right\vert
\leq C_{x,p,q};$\texttt{\ }

(iv) $P_{0}^{(a,b)}(x)=0$ for all $x;$

(v) $\frac{d}{dx}P_{n}^{(a,b)}(x)=\frac{1}{2}(n+a+b+1)P_{n-1}^{(a+1,b+1)}(x)$
if $n\neq 0.$
\end{lemma}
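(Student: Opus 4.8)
The plan is to recognize Lemma \ref{basic} as a catalogue of classical facts about Jacobi polynomials and to establish each item by reducing it to a standard formula, while keeping careful track of the one parameter range relevant here, namely $a=p-q\ge 0$ and $b=0$. I would split the five items into the exact identities (i), (iv), (v), which come from closed-form expressions, and the two genuine estimates (ii) and (iii), which carry all of the analytic content.

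For the exact identities I would argue as follows. Item (i) rests on the evaluation $P_n^{(a,b)}(1)=\binom{n+a}{n}$; the comparability $\binom{n+a}{n}\sim(n+1)^a$ then follows from $\binom{n+a}{n}=\Gamma(n+a+1)/(\Gamma(a+1)\,n!)$ together with the standard fact that $\Gamma(n+a+1)/\Gamma(n+1)$ is comparable to $n^a$ for fixed $a$. The value at $-1$ comes from the reflection identity $P_n^{(a,b)}(-x)=(-1)^nP_n^{(b,a)}(x)$ evaluated at $x=1$, giving $P_n^{(a,b)}(-1)=(-1)^n\binom{n+b}{n}$. Item (v) is the standard differentiation formula for Jacobi polynomials, and item (iv) is the trivial degree-zero case whose only role is to complement (v) at $n=0$, where the derivative vanishes. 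All three are recorded in Szeg\H{o}'s monograph \emph{Orthogonal Polynomials}, and I would cite the relevant formulas rather than reprove them.

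The two estimates are where I would spend the real effort. For (ii), writing $x=\cos\theta$ with $\theta$ confined to a compact subset of $(0,\pi)$ (equivalently $x\neq\pm1$), the classical Darboux asymptotic of the form $P_n^{(a,b)}(\cos\theta)=(n+1)^{-1/2}g(\theta)\cos(N\theta+\gamma)+O(n^{-3/2})$ yields the uniform-on-compacta bound $|P_n^{(a,b)}(x)|\le C_{a,b,x}(n+1)^{-1/2}$; the point to flag is that the constant necessarily degenerates as $x\to\pm1$, which is exactly why (ii) excludes the endpoints. For (iii) I would invoke the classical result that, when $a\ge b$ and $a\ge-1/2$, the quantity $|P_n^{(a,b)}(x)|$ attains its maximum over $[-1,1]$ at $x=1$; since here $a=p-q\ge 0=b$, this applies and gives $|P_n^{(a,0)}(x)|\le P_n^{(a,0)}(1)$ for all $x\in[-1,1]$, hence $|\widetilde{P_n}(x)|\le 1$.

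The main obstacle is not difficulty but bookkeeping: one must verify that the hypotheses of the endpoint-maximum theorem genuinely hold in the present normalization ($0=b\le a$ and $a\ge-1/2$, including the case $p=q$ where $a=b=0$), and one must keep the merely pointwise estimate (ii), whose constant blows up near $\pm1$, conceptually distinct from the genuinely uniform bound (iii). If one prefers to avoid the endpoint-maximum theorem, a fallback proof of (iii) combines (i) and (ii): for interior $x$ one has $|P_n^{(a,0)}(x)|\le C_x(n+1)^{-1/2}$ while $P_n^{(a,0)}(1)\sim(n+1)^{a}$ with $a\ge 0$, so the ratio is bounded, and the endpoint $x=-1$ is then dispatched by the explicit value in (i). I would present the endpoint-maximum argument as the clean version and keep the (i)-plus-(ii) combination in reserve.
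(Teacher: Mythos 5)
The paper supplies no proof of this lemma at all: it is stated as a list of ``well known facts'' and the text proceeds directly to the proof of Theorem \ref{decay}(ii). So there is no argument of the authors' to compare against; what you have written is a correct and appropriately referenced justification of the items the paper actually uses. Your treatment of (i), (ii), (iii) and (v) is sound: the evaluation at $\pm1$ via the reflection identity $P_n^{(a,b)}(-x)=(-1)^nP_n^{(b,a)}(x)$, the Darboux asymptotic for the interior decay $O((n+1)^{-1/2})$ with a constant that degenerates at the endpoints, the endpoint-maximum theorem (valid here since $a=p-q\ge 0=b\ge -1/2$, giving in fact $|\widetilde{P_n}(x)|\le 1$), and the standard differentiation formula. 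Your fallback derivation of (iii) from (i) and (ii) is also fine and is arguably closer to how the bound is actually exploited later, since the paper only ever needs the normalized ratio.

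The one point you should not let pass silently is (iv). As literally stated, $P_0^{(a,b)}(x)=0$ is false: the degree-zero Jacobi polynomial is identically $1$ (consistent with (i), since $\binom{0+a}{0}=1$). Your gloss --- that (iv) ``complements (v) at $n=0$, where the derivative vanishes'' --- identifies the intended role but does not prove the statement as written, because no true statement is being proved. What the paper actually needs, when it iterates (v) in Lemmas \ref{2e1} and \ref{e1}, is the convention $P_m^{(a,b)}\equiv 0$ for $m<0$ (so that the $(k-1)$-st derivative of $P_n$ vanishes once $k-1>n$); note that the later computation $P_{n_j-(k-1)}^{(p-q+k-1,k-1)}(-1)=(-1)^{n_j-(k-1)}\binom{n_j}{n_j-(k-1)}$ uses $P_0=1$, not $P_0=0$. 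So (iv) is a misstatement to be corrected, not a fact to be proved, and a careful write-up should say so explicitly.
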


We will first prove part (ii) of the Theorem, the special case when $X$ is
regular.

\begin{proof}
\lbrack of Theorem(ii)] Assume $X=X(t_{1},...,t_{q})$ is regular. Then $%
e_{i}\pm e_{j}(X)=t_{j}\pm t_{k}\neq 0\func{mod}\pi $ for any $j\neq k$ and
that means $\cos 2t_{j}\neq \cos 2t_{k}$ for any $j\neq k$. Furthermore, $%
2e_{k}(X)=2t_{k}\neq 0\func{mod}\pi $ and that implies $\cos (2t_{k})\neq
\pm 1$ for any $k$. This latter fact ensures that

\begin{equation*}
\left\vert \widetilde{P_{n}}(\cos (2t_{k}))\right\vert =\left\vert \frac{%
P_{n}^{(p-q,0)}(\cos (2t_{k}))}{P_{n}^{(p-q,0)}(1)}\right\vert \leq \frac{C}{%
\left( n_{j}+1\right) ^{p-q+1/2}}.
\end{equation*}%
Consequently, formula (\ref{formula}) implies

\begin{eqnarray*}
\left\vert \phi _{\lambda }(X)\right\vert &\leq &\frac{C}{%
\tprod\limits_{1\leq j<k\leq
q}(n_{j}(n_{j}+r)-n_{k}(n_{k}+r))\tprod\limits_{1\leq j\leq
q}(n_{j}+1)^{p-q+1/2}} \\
&\leq &\frac{C}{\tprod\limits_{j=1}^{q-1}(n_{j}+r)^{q-j}\tprod%
\limits_{j=1}^{q}(n_{j}+1)^{p-q+1/2}},
\end{eqnarray*}%
as claimed.
\end{proof}

For general $X$ we must consider the possibility that the formula (\ref%
{formula}) for $\phi _{\lambda }(X)$ has to be understood through the
limiting process. The next several lemmas will help with this. The proof of
the first lemma also introduces a reduction technique that will be
frequently used throughout the remainder of the proof of the theorem.

\begin{lemma}
\label{e1+e2}Fix $X=X(t_{1},...,t_{q})$ and suppose there is an index $k_{0}$
such that $\cos 2t_{k_{0}}\neq $ $\cos 2t_{k}$ for any $k\neq k_{0}$. Then 
\begin{equation*}
\left\vert \phi _{\lambda }(X(t_{1},...,t_{q}))\right\vert \leq
C\prod_{j=1}^{q-1}(n_{j}+1)^{-1}.
\end{equation*}
\end{lemma}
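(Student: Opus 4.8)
The plan is to begin from the Berezin--Karpelevich formula (\ref{formula}) and to separate out everything attached to the distinguished index $k_{0}$. In the Vandermonde factor $\prod_{1\le j<k\le q}(\cos 2t_{j}-\cos 2t_{k})$ the terms containing $k_{0}$ multiply to $\pm\prod_{k\ne k_{0}}(\cos 2t_{k_{0}}-\cos 2t_{k})$, which by hypothesis is a fixed nonzero constant $c_{0}=c_{0}(X)$; the remaining terms form the Vandermonde $V'=\prod_{j<k,\;j,k\ne k_{0}}(\cos 2t_{j}-\cos 2t_{k})$ of the other nodes. Writing $W=\prod_{1\le j<k\le q}\big(n_{j}(n_{j}+r)-n_{k}(n_{k}+r)\big)$, formula (\ref{formula}) becomes
\begin{equation*}
\phi_{\lambda}(X)=\frac{\pm C_{p,q}}{c_{0}\,W}\cdot\frac{\det\big(\widetilde{P_{n_{j}}}(\cos 2t_{k})\big)_{j,k=1}^{q}}{V'},
\end{equation*}
the quotient on the right being understood in the removable (polynomial) sense when some of the $\cos 2t_{k}$ with $k\ne k_{0}$ coincide. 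By (\ref{njproperty}), $W\ge\prod_{j=1}^{q-1}(n_{j}+1)^{q-j}$, so $W$ supplies a budget of $q-j\ge1$ powers for each index $j<q$; the whole problem is thus to bound the reduced determinant $\det(\widetilde{P_{n_{j}}}(\cos 2t_{k}))/V'$ against this budget.

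To treat the reduced determinant I would expand along column $k_{0}$. Its entries $\widetilde{P_{n_{j}}}(\cos 2t_{k_{0}})$ are bounded by a constant by Lemma \ref{basic}(iii), and each cofactor is a $(q-1)\times(q-1)$ minor in the remaining nodes. Since every $\widetilde{P_{n_{j}}}$ is a polynomial in $x=\cos 2t$, each minor is antisymmetric in those nodes, hence divisible by $V'$, and the minor divided by $V'$ equals a determinant of divided differences of the $\widetilde{P_{n_{j}}}$ on the nodes $\cos 2t_{k}$, $k\ne k_{0}$ (the confluent Vandermonde identity). This expression stays finite and continuous when nodes collide, and the Hermite--Genocchi bound estimates a divided difference on $s$ colliding nodes by $\frac{1}{(s-1)!}\sup|\widetilde{P_{n_{j}}}^{(s-1)}|$ over the enclosing interval, a quantity one computes by iterating Lemma \ref{basic}(v) and evaluating via (i) and (ii). This is the reduction step I expect to reuse: the $q$-node problem is converted into a divided-difference determinant over the $q-1$ surviving nodes.

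The main obstacle is that estimating this divided-difference determinant term by term is too lossy. When a cluster of the surviving nodes collapses onto an endpoint $x=\pm1$, the derivatives grow like $\sup|\widetilde{P_{n_{j}}}^{(s)}|\sim (n_{j}+1)^{2s}$, and the crude permutation bound then exceeds the budget in $W$; already for $q=3$ it yields only $\big((n_{1}-n_{2})(n_{2}+1)\big)^{-1}$, which fails when $n_{1}\approx n_{2}$. The resolution---and the heart of the argument---is that the determinant hides exactly the cancellation needed. The Jacobi differential equation has $\widetilde{P_{n}}$ as eigenfunction with eigenvalue $\nu=n(n+r)$, and from it one checks that $\widetilde{P_{n}}^{(s)}(\pm1)/\widetilde{P_{n}}(\pm1)$ is a polynomial of degree $s$ in $\nu$ (for instance $\widetilde{P_{n}}'(1)=\nu/(2r)$). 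Consequently the confluent block produced by a cluster at an endpoint is, up to the bounded factors $\widetilde{P_{n_{j}}}(\pm1)$, a Vandermonde determinant in the $\nu_{j}=n_{j}(n_{j}+r)$, which reproduces and cancels precisely the factors $n_{j}(n_{j}+r)-n_{k}(n_{k}+r)$ of $W$ attached to those indices; a cluster at an interior point instead contributes genuine decay through Lemma \ref{basic}(ii). The delicate bookkeeping, which I expect to be the crux, is to organize this cancellation uniformly over every coincidence pattern of the $q-1$ nodes---clusters at $+1$, at $-1$, and at interior points in all combinations---and to verify that after cancelling against $W$ the surviving factors are dominated by $\prod_{j=1}^{q-1}(n_{j}+1)^{-1}$.
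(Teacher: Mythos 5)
Your opening move matches the paper's: expand the determinant along column $k_{0}$, observe that the Vandermonde factors attached to $k_{0}$ are a nonzero constant of $X$, and extract the decay $\prod_{j=1}^{q-1}(n_{j}+1)^{-1}$ from the eigenvalue factors $n_{j}(n_{j}+r)-n_{j_{0}}(n_{j_{0}}+r)$ attached to the row index $j_{0}$ where the cofactor expansion is maximized, via (\ref{njproperty}) and the monotonicity of the $n_{j}$. Up to that point you are on track.

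The gap is in what you do with the reduced quotient (the $(q-1)\times(q-1)$ minor divided by the residual Vandermonde $V'$ and the residual eigenvalue factors). You propose to control it by a confluent divided-difference analysis, and you correctly diagnose that a crude term-by-term bound fails when nodes cluster; but the repair you sketch --- organizing the Vandermonde-in-$\nu_{j}=n_{j}(n_{j}+r)$ cancellations uniformly over every coincidence pattern of clusters at $+1$, $-1$, and interior points --- is exactly the part you leave unexecuted, and you concede it is the crux. That is not a detail one can wave at: for mixed patterns the endpoint clusters produce polynomial growth in the $\nu_{j}$ while interior clusters produce decay, and verifying that the determinantal cancellation exactly consumes the corresponding factors of $W$ in every configuration is a substantial combinatorial argument (the paper only ever carries out such a computation in the fully confluent cases, Lemmas \ref{2e1} and \ref{e1}). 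The paper's proof of the present lemma avoids this entirely with one observation you are missing: the reduced quotient is, up to a constant, \emph{itself a spherical function} --- namely $|\phi_{\lambda'}(X')|$ on the lower-rank Grassmannian $SU((p-1)+(q-1))/S(U(p-1)\times U(q-1))$, with $\lambda'$ chosen so that the $n_{i}'$ are the $n_{i}$ with $i=j_{0}$ deleted (this works because the Jacobi parameter $p-q=(p-1)-(q-1)$ is unchanged, so the same normalized polynomials $\widetilde{P_{n}}$ appear in the Berezin--Karpelevich formula for the smaller space). Since spherical functions on a compact symmetric space are bounded by $1$, the reduced quotient is bounded by a constant, all coincidence patterns included, and the lemma follows. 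Without either this reduction or a completed version of your cancellation bookkeeping, the proof is not done.
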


\begin{proof}
First, suppose $q=2$. In this case, 
\begin{equation*}
\left\vert \phi _{\lambda }(X(t_{1},t_{2}))\right\vert =\frac{C\left\vert
\det M\right\vert }{\left\vert \cos 2t_{1}-\cos 2t_{2}\right\vert
(n(n_{1}+r)-n_{2}(n_{2}+r))|}
\end{equation*}%
where from Lemma \ref{basic}(iii) we see that $M$ is a $2\times 2$ matrix
with entries bounded independent of the choice of $\lambda $. As $\cos
2t_{1}\neq \cos 2t_{2},$ it follows that%
\begin{equation*}
\left\vert \phi _{\lambda }(X(t_{1},t_{2}))\right\vert \leq \frac{C}{n_{1}+r}%
\leq C(n_{1}+1)^{-1}
\end{equation*}%
as we desired to show.

Now assume $q>2$. Expand the determinant in (\ref{formula}) along column $%
k_{0}$ to obtain%
\begin{eqnarray*}
\left\vert \det \left( \widetilde{P_{n_{j}}}(\cos 2t_{k})\right)
_{j,k=1}^{q}\right\vert &=&\left\vert \sum_{i=1}^{q}(-1)^{i}\widetilde{%
P_{n_{i}}}(\cos 2t_{k_{0}})\det \left( \widetilde{P_{n_{j}}}(\cos
2t_{k})\right) _{j\neq i,k\neq k_{0}}\right\vert \\
&\leq &q\max_{i}\left\vert \widetilde{P_{n_{i}}}(\cos 2t_{k_{0}})\det \left( 
\widetilde{P_{n_{j}}}(\cos 2t_{k})\right) _{j\neq i,k\neq k_{0}}\right\vert .
\end{eqnarray*}%
Assume the maximum occurs at index $i=j_{0}$. Then 
\begin{equation*}
\left\vert \phi _{\lambda }(X(t_{1},...,t_{q}))\right\vert \leq CI_{1}\cdot
I_{2}
\end{equation*}%
where%
\begin{equation*}
I_{1}=\frac{\left\vert \widetilde{P_{n_{j_{0}}}}(\cos 2t_{k_{0}})\right\vert 
}{\tprod\limits_{k\neq k_{0}}\left\vert \cos 2t_{k}-\cos
2t_{k_{0}}\right\vert \tprod\limits_{j\neq j_{0}}\left\vert
n_{j}(n_{j}+r)-n_{j_{0}}(n_{j_{0}}+r)\right\vert }
\end{equation*}%
and 
\begin{equation*}
I_{2}=\frac{\left\vert \det \left( \widetilde{P_{n_{j}}}(\cos 2t_{k})\right)
_{j\neq j_{0},k\neq k_{0}}\right\vert }{\tprod\limits_{\substack{ 1\leq
j<k\leq q  \\ j,k\neq k_{0}}}|\cos 2t_{j}-\cos 2t_{k}|\tprod\limits 
_{\substack{ 1\leq j<k\leq q  \\ j,k\neq j_{0}}}%
(n_{j}(n_{j}+r)-n_{k}(n_{k}+r))}
\end{equation*}%
(with $I_{2}$ understood in the limiting sense if some $\cos 2t_{j}=\cos
2t_{k}).$

As $\cos 2t_{k}\neq \cos 2t_{k_{0}}$ when $k\neq k_{0}$, applying property (%
\ref{njproperty}) gives%
\begin{eqnarray*}
I_{1} &\leq &\frac{C}{\tprod\limits_{j\neq j_{0}}\left\vert
n_{j}(n_{j}+r)-n_{j_{0}}(n_{j_{0}}+r)\right\vert } \\
&\leq &\frac{C}{(n_{1}+r)\cdot \cdot \cdot
(n_{j_{0}-1}+r)(n_{j_{0}}+r)^{q-j_{0}}}\leq
C\tprod\limits_{j=1}^{q-1}(n_{j}+r)^{-1}
\end{eqnarray*}%
since $n_{1}>n_{2}>\cdot \cdot \cdot >n_{q}.$

In order to bound $I_{2},$ we will use a reduction argument. Consider the
symmetric space $G^{\prime }/K^{\prime }=SU((p-1)+(q-1))/S(U(p-1)\times
U(q-1))$ where $p-1\geq q-1\geq 2,$ and the spherical representation $%
\lambda ^{\prime }=\sum_{i=1}^{q-1}2m_{i}^{\prime }e_{i}$ where $e_{i}$ are
the standard basis vectors for $\mathbb{R}^{q-1}$ and 
\begin{equation*}
m_{i}^{\prime }=\left\{ 
\begin{array}{cc}
n_{i}-(q-1)+i & \text{for }i=1,...,j_{0}-1 \\ 
n_{i+1}-(q-1)+i & \text{for }i=j_{0},...,q-1%
\end{array}%
\right. .
\end{equation*}%
This choice is made so that $n_{i}^{\prime }=m_{i}^{\prime }+(q-1)-i$
satisfies the conditions $n_{i}^{\prime }=n_{i}$ for $i=1,...,j_{0}-1$ and $%
n_{i}^{\prime }=n_{i+1}$ for $i=j_{0},...,q-1$. Let $t^{\prime }=(t_{1},...,%
\widehat{t_{k_{0}}},...,t_{q})$ belong to $\mathfrak{a}^{\prime }$ for $%
G^{\prime }/K^{\prime }$ (here the notation $\widehat{\cdot }$ means the
element is not present) and $\exp iX^{\prime }\in A^{\prime }\subseteq
G^{\prime }$ for $X^{\prime }=X^{\prime }(t^{\prime })$. Notice that for any 
$x,$ 
\begin{equation*}
\widetilde{P_{n}}(x)=\frac{P_{n}^{((p-1)-(q-1),0)}(x)}{%
P_{n}^{((p-1)-(q-1),0)}(1)},
\end{equation*}%
so $I_{2}=C\left\vert \phi _{\lambda ^{\prime }}(X^{\prime })\right\vert
\leq C$ since all spherical functions are bounded by $1$. Thus%
\begin{equation*}
\left\vert \phi _{\lambda }(X(t_{1},...,t_{q}))\right\vert \leq CI_{1}\cdot
I_{2}\leq C\tprod\limits_{j=1}^{q-1}(n_{j}+r)^{-1}
\end{equation*}
\end{proof}

\begin{lemma}
\label{e1+e2(2)}Suppose (upon a suitable reordering of coefficients, if
necessary) $X=X(b_{1},...,b_{s})$ where each $%
b_{j}=(t_{1}^{(j)},...,t_{L_{j}}^{(j)})$ with $\cos (2t_{i}^{(j)})=\cos
(2t_{1}^{(j)})$ for all $i=1,...,L_{j},$ $\cos (2t_{1}^{(i)})\neq \cos
(2t_{1}^{(j)})$ if $i\neq j$, $L_{j}\geq 2$ for all $j$ and $s\geq 2$. Then%
\begin{equation*}
\left\vert \phi _{\lambda }(X)\right\vert \leq
C\tprod\limits_{j=1}^{q}(n_{j}+r)^{-1}.
\end{equation*}
\end{lemma}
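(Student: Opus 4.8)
The plan is to analyze the Berezin-Karpelevich formula (\ref{formula}) by exploiting the block structure imposed by the hypothesis. Here the indices $\{1,\dots,q\}$ partition into $s\geq 2$ blocks, where within block $j$ all the values $\cos(2t_i^{(j)})$ coincide (with $L_j\geq 2$ variables each), while values from distinct blocks are distinct. The principal difficulty is that within each block the denominator factors $(\cos 2t_j-\cos 2t_k)$ vanish, so the quotient in (\ref{formula}) must be interpreted through the limiting process, and the numerator determinant $\det(\widetilde{P_{n_j}}(\cos 2t_k))$ vanishes correspondingly. The key idea I would use is that when several columns share the same argument $x$, the limiting determinant is computed by replacing those columns with successive derivatives of the Jacobi polynomials evaluated at $x$ (a confluent/Hermite-type reduction), and Lemma \ref{basic}(v) tells us each derivative $\frac{d}{dx}P_n^{(a,b)}$ raises the parameters while multiplying by a factor of order $n$.

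First I would set up the confluent form of the determinant. For a block $b_j$ with common value $x_j=\cos(2t_1^{(j)})$ and $L_j$ columns, the standard limit of a Vandermonde-type determinant with repeated nodes produces, for that block, the values $\widetilde{P_{n}}(x_j), \widetilde{P_n}'(x_j),\dots,\widetilde{P_n}^{(L_j-1)}(x_j)$ across the rows. Using Lemma \ref{basic}(v) iteratively, the $\ell$-th derivative $\widetilde{P_n}^{(\ell)}(x_j)$ is a constant multiple of $n^{\ell}$ times a normalized Jacobi polynomial $P_{n-\ell}^{(p-q+\ell,\ell)}(x_j)/P_n^{(p-q,0)}(1)$, and since $x_j\neq\pm1$ (as $L_j\geq 2$ forces $\cos 2t_j=\cos 2t_k$ for some $j\neq k$, which is compatible with regularity failing only at those coincidences but $2t_k\equiv 0\bmod\pi$ is excluded precisely when the node avoids $\pm 1$; I would check this carefully), Lemma \ref{basic}(ii)--(iii) bounds each such polynomial. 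The upshot is that the limiting numerator is bounded by $C\prod_j n_j^{(\text{number of derivatives applied to row }j)}$ times bounded Jacobi factors.

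Next I would pair the growth from the confluent numerator against the vanishing denominator factors. The product $\prod_{1\leq j<k\leq q}(\cos 2t_j-\cos 2t_k)$ retains only the factors between distinct blocks (the within-block factors are absorbed by the confluent limit), while the eigenvalue differences $\prod(n_j(n_j+r)-n_k(n_k+r))$ remain intact. The plan is to reorganize, as in Lemma \ref{e1+e2}, by performing a cofactor expansion that isolates one row from each block, reducing to a smaller-rank spherical function $\phi_{\lambda'}$ on $SU((p-L)+(q-L))/S(U(p-L)\times U(q-L))$ bounded by $1$, where $L=\sum_j(L_j-1)$ is the total excess multiplicity. The surviving prefactor $I_1$ should, after using (\ref{njproperty}) to bound each eigenvalue difference $n_j(n_j+r)-n_k(n_k+r)\geq n_j+r$, yield exactly $\prod_{j=1}^{q}(n_j+r)^{-1}$: each of the $q$ indices contributes at least one factor of $(n_j+r)^{-1}$ from the denominator, and the derivative-induced numerator growth $\prod n_j^{\ell_j}$ is exactly cancelled by the extra eigenvalue-difference factors one gains from the $s\geq 2$ blocks each having $L_j\geq 2$ columns.

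The main obstacle will be making the confluent limit bookkeeping precise: correctly counting how many derivatives land on each row, verifying that the Jacobi derivative parameters $(p-q+\ell,\ell)$ still satisfy the hypotheses of Lemma \ref{basic}(ii) (so the bounds at $x\neq\pm1$ apply uniformly in $n$), and confirming that the numerator growth of order $\prod n_j^{\ell_j}$ is \emph{exactly} matched—neither exceeded nor undershot—by the denominator gains so that the final exponent on every $(n_j+r)$ is precisely $-1$. I would handle this by organizing the computation so that the reduction to $\phi_{\lambda'}$ captures all the bounded factors uniformly, leaving a clean rational function in the $n_j$ that (\ref{njproperty}) controls. The fact that $s\geq 2$ and every $L_j\geq 2$ is what guarantees enough distinct inter-block separations survive in the denominator to absorb all derivative growth down to the stated $\prod_{j=1}^{q}(n_j+r)^{-1}$ bound.
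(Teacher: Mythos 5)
Your confluent-derivative strategy is genuinely different from the paper's argument, and as outlined it has two real gaps. First, the hypothesis of the lemma does \emph{not} guarantee that the common value $x_j=\cos(2t_1^{(j)})$ of a block avoids $\pm 1$: for example $X(\pi/2,\pi/2,\pi/4,\pi/4)$ satisfies the hypotheses with $s=2$, $L_1=L_2=2$, and one block sitting at $\cos 2t=-1$. Your bookkeeping leans on Lemma \ref{basic}(ii), which requires $x\neq\pm1$, to get the $n^{-1/2}$ decay of each derived Jacobi polynomial; at $x=-1$ one instead has $P_{n-\ell}^{(p-q+\ell,\ell)}(-1)=(-1)^{n-\ell}\binom{n}{n-\ell}\sim n^{\ell}$, which grows, so the claimed cancellation fails block by block and would need a separate computation of the type carried out in Lemma \ref{e1}. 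Second, the central claim that the derivative-induced growth $\prod_j n_j^{\ell_j}$ is ``exactly cancelled'' by denominator gains is asserted rather than verified. The single-block analogue (Lemma \ref{2e1}) shows how delicate this accounting is: there the confluent determinant is bounded by $C\prod(n_j+1)^{q-j-1/2}$ only after a case split on whether $n_j\leq q-1$ (since $P_m^{(a,b)}=0$ for $m\leq 0$ kills terms where too many derivatives land on a small $n_j$), and the resulting exponents are not uniformly $-1$. In the mixed-block setting the permutation count is more intricate still, and nothing in your outline pins down why every index ends with exponent exactly $-1$.

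The paper avoids all of this by never differentiating. It applies a Laplace expansion of the determinant along the column blocks, writing $\det(\widetilde{P_{n_j}}(\cos 2t_k))=\sum_\sigma c_\sigma A_\sigma B_\sigma$ where $\sigma$ selects which $L_1$ rows pair with the first block of columns. The two within-block ratios $I_\sigma, J_\sigma$ (each a block minor divided by its own vanishing cosine differences and its own eigenvalue differences) are recognized, via the Berezin--Karpelevich formula, as spherical functions of lower-rank Grassmannians, hence bounded by $1$ \emph{including} in the limiting sense and regardless of whether the block value is $\pm1$. All of the decay then comes from the cross-block factor $K_\sigma$, whose cosine differences are bounded away from zero by hypothesis and whose eigenvalue differences are handled by $|n_j(n_j+r)-n_k(n_k+r)|\geq\sqrt{(n_j+r)(n_k+r)}$: the $L_1L_2$ cross pairs give each index $n_i$ a total exponent of at least $\min(L_1,L_2)/2\geq 1$, which is precisely where the hypotheses $L_j\geq 2$, $s\geq 2$ enter. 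You correctly sensed that the inter-block separations drive the estimate, but the mechanism is this square-root pairing of eigenvalue differences, not a cancellation against derivative growth. If you want to salvage your route, you would need to redo the Lemma \ref{2e1}/\ref{e1} computations blockwise and track the permutation maxima across blocks; the paper's reduction is substantially shorter and more robust.
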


\begin{proof}
We will give the details for $s=2,$ but it will be clear how the method
generalizes. Thus assume $X=X(t_{1},...,t_{L_{1}},t_{L_{1}+1},...,t_{q})$
where $\cos (2t_{1})=\cos (2t_{j})$ for $j=2,...,L_{1},$ $\cos (2t_{q})=\cos
(2t_{k})$ for $k=L_{1}+1,...,q-1$, $\cos (2t_{1})\neq \cos (2t_{q})$ and $%
L_{1},L_{2}=q-L_{1}\geq 2$.

A basic property of the determinant is that 
\begin{equation*}
\det \left( \widetilde{P_{n_{j}}}(\cos 2t_{k})\right)
_{j,k=1}^{q}=\sum_{\sigma }c_{\sigma }A_{\sigma }B_{\sigma }
\end{equation*}%
where the sum is over all choices, $\sigma ,$ of $L_{1}$ indices, say $%
\sigma :j_{1}<\cdot \cdot \cdot <j_{L_{1}},$ $A_{\sigma }$ is the
determinant of the $L_{1}\times L_{1}$ matrix $\left( \widetilde{%
P_{n_{j_{i}}}}(\cos 2t_{k})\right) _{i,k=1}^{L_{1}}$, $B_{\sigma }$ is the
determinant of the $L_{2}\times L_{2}$ submatrix of $\left( \widetilde{%
P_{n_{j}}}(\cos 2t_{k})\right) $ formed by the remaining rows and columns
(the remaining columns being columns $L_{1}+1,...,q$), and $c_{\sigma }$ is
a suitable choice of $\pm 1$. This gives the bound 
\begin{equation*}
\left\vert \phi _{\lambda }(X)\right\vert \leq C\sum_{\sigma }I_{\sigma
}J_{\sigma }K_{\sigma },
\end{equation*}%
where%
\begin{equation*}
I_{\sigma }=\frac{\left\vert A_{\sigma }\right\vert }{\tprod\limits_{1\leq
j<k\leq L_{1}}|\cos 2t_{j}-\cos 2t_{k}|\tprod\limits_{1\leq i<l\leq
L_{1}}(n_{j_{i}}(n_{j_{i}}+r)-n_{j_{l}}(n_{j_{l}}+r))}
\end{equation*}

\begin{equation*}
J_{\sigma }=\frac{\left\vert B_{\sigma }\right\vert }{\tprod\limits_{L_{1}+1%
\leq j<k\leq q}|\cos 2t_{j}-\cos 2t_{k}|\tprod\limits_{\substack{ 1\leq
j<k\leq q  \\ j,k\neq j_{1},...,j_{L_{1}}}}(n_{j}(n_{j}+r)-n_{k}(n_{k}+r))},
\end{equation*}%
and 
\begin{equation*}
K_{\sigma }=\frac{1}{\tprod\limits_{\substack{ 1\leq j\leq L_{1}  \\ %
L_{1}+1\leq k\leq 1}}|\cos 2t_{j}-\cos 2t_{k}|\tprod\limits_{\substack{ %
1\leq i\leq L_{1}  \\ k\neq j_{1},...,j_{L_{1}}}}%
(n_{j_{i}}(n_{j_{i}}+r)-n_{k}(n_{k}+r))}.
\end{equation*}%
(As usual, $I_{\sigma },J_{\sigma }$ should be understood in the limiting
sense.)

Since 
\begin{equation*}
\left\vert n_{j_{i}}(n_{j_{i}}+r)-n_{k}(n_{k}+r)\right\vert \geq \max
(n_{j}+r,n_{k}+r)\geq \sqrt{(n_{j}+r)(n_{k}+r)},
\end{equation*}%
it follows that 
\begin{eqnarray*}
&&\tprod\limits_{\substack{ (i,k):1\leq i\leq L_{1}  \\ k\neq
j_{1},...,j_{L_{1}}}}(n_{j_{i}}(n_{j_{i}}+r)-n_{k}(n_{k}+r))\geq
\tprod\limits_{\substack{ (i,k):1\leq i\leq L_{1}  \\ k\neq
j_{1},...,j_{L_{1}}}}\sqrt{(n_{j_{i}}+r)(n_{k}+r)} \\
&=&\prod_{j\in \{j_{1},...,j_{L_{1}}\}}(n_{j}+r)^{L_{2}/2}\prod_{k\notin
\{j_{1},...,j_{L_{1}}\}}(n_{k}+r)^{L_{1}/2} \\
&\geq &\prod_{i=1}^{q}(n_{i}+r)^{\min (L_{1},L_{2})/2}\geq
\prod_{i=1}^{q}(n_{i}+r).
\end{eqnarray*}%
As $\cos 2t_{j}\neq \cos 2t_{k}$ whenever $j\leq L_{1}$ and $K\geq L_{1}+1$,
we see that 
\begin{equation*}
K_{\sigma }\leq C\prod_{i=1}^{q}(n_{i}+r)^{-1}.
\end{equation*}

Similar to the previous lemma, $I_{\sigma }$ and $J_{\sigma }$ are, up to a
constant, the modulus of spherical functions on Grassmanian symmetric spaces
of rank $L_{1}$ and $L_{2}$ respectively. Indeed, $I_{\sigma }=\left\vert
\phi _{\lambda ^{\prime }}(X^{\prime })\right\vert $ on $%
SU(p+q)/S(U(p+q-L_{1})\times U(L_{1}))$ where $\lambda ^{\prime }$ is
identified with $(n_{j_{1}},...,n_{L_{1}})$ and $X^{\prime }=X^{\prime
}(t_{1},...,t_{L_{1}}),$ and similarly for $J_{\sigma }$. Hence both are
bounded and therefore $\phi _{\lambda }(X)$ has the claimed bound.
\end{proof}

\begin{lemma}
\label{2e1}Suppose $X=X(t_{1},...,t_{q})$ where $\cos (2t_{j})=\cos (2t_{k})$
for all $j,k,$ but $\cos (2t_{j})\neq \pm 1$. Then 
\begin{equation*}
\left\vert \phi _{\lambda }(X)\right\vert \leq
C\tprod\limits_{j=1}^{q-1}(n_{j}+r)^{-p+q-1/2}(n_{q}+1)^{-p+q}.
\end{equation*}
\end{lemma}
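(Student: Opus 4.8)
The plan is to evaluate formula (\ref{formula}) in the fully confluent limit, in which every $\cos 2t_k$ tends to the common value $c=\cos 2t_1$ (with $|c|<1$), and then to estimate the resulting Wronskian-type determinant using the differentiation identity for Jacobi polynomials. Writing $x_k=\cos 2t_k$ and $f_j=\widetilde{P_{n_j}}$, the only part of (\ref{formula}) that becomes indeterminate as $x_k\to c$ for all $k$ is the ratio of the numerator determinant to $\prod_{j<k}(x_j-x_k)$; the factor $\prod_{j<k}(n_j(n_j+r)-n_k(n_k+r))$ is unaffected. The standard confluent Vandermonde (generalized divided difference) limit gives
\[
\lim_{x_1,\dots,x_q\to c}\frac{\det\big(f_j(x_k)\big)_{j,k=1}^q}{\prod_{1\le j<k\le q}(x_j-x_k)}=\pm\,\det\!\Big(\frac{1}{(k-1)!}f_j^{(k-1)}(c)\Big)_{j,k=1}^q=:\pm\,W,
\]
so that $|\phi_\lambda(X)|=C_{p,q}|W|\big/\prod_{1\le j<k\le q}|n_j(n_j+r)-n_k(n_k+r)|$, the $\pm$ absorbing the sign discrepancy with the usual Vandermonde convention.

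Next I would bound the entries of $W$. Iterating Lemma \ref{basic}(v) with $a=p-q$, $b=0$ (so $a+b+1=r$) yields
\[
\frac{d^{m}}{dx^{m}}P_n^{(p-q,0)}(x)=\frac{1}{2^{m}}\Big(\prod_{i=0}^{m-1}(n+r+i)\Big)P_{n-m}^{(p-q+m,\,m)}(x),
\]
and since $|c|<1$, Lemma \ref{basic}(ii) gives $|P_{n-m}^{(p-q+m,m)}(c)|\le C(n+1)^{-1/2}$, the constant being uniform over the finitely many $m\le q-1$ and depending on $c$ (hence on $X$). Combining this with $\prod_{i=0}^{m-1}(n+r+i)\sim(n+1)^{m}$ and with $P_n^{(p-q,0)}(1)\sim(n+1)^{p-q}$ from Lemma \ref{basic}(i), the $(j,k)$ entry of $W$ (where $m=k-1$) satisfies $\big|\tfrac{1}{(k-1)!}\widetilde{P_{n_j}}^{(k-1)}(c)\big|\le C\,(n_j+1)^{(k-1)-p+q-1/2}$.

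To control $|W|$ I would expand the determinant over permutations, $|W|\le\sum_{\sigma\in S_q}\prod_{j}|W_{j,\sigma(j)}|$, factor out $\prod_j (n_j+1)^{-p+q-1/2}$, and apply the rearrangement inequality: since $n_1>n_2>\cdots>n_q$, the product $\prod_j(n_j+1)^{\sigma(j)-1}$ is largest for $\sigma(j)=q+1-j$, giving the term $\prod_j(n_j+1)^{q-j}$. As there are only $q!$ permutations, this yields $|W|\le C\prod_{j=1}^{q}(n_j+1)^{-p+q-1/2+q-j}$.

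Finally I would bound the denominator from below. Factoring $n_j(n_j+r)-n_k(n_k+r)=(n_j-n_k)(n_j+n_k+r)$ and using $n_j-n_k\ge1$ and $n_j+n_k+r\ge n_j+r$ for $j<k$ (cf. (\ref{njproperty})) gives $\prod_{1\le j<k\le q}|n_j(n_j+r)-n_k(n_k+r)|\ge\prod_{j=1}^{q-1}(n_j+r)^{q-j}$. Dividing, and using $n_j+1\sim n_j+r$, the exponent of $(n_j+1)$ becomes $-p+q-1/2$ for $j=1,\dots,q-1$ (the factor $(n_j+1)^{q-j}$ cancelling $(n_j+r)^{-(q-j)}$) and likewise $-p+q-1/2$ for $j=q$; since $(n_q+1)^{-p+q-1/2}\le(n_q+1)^{-p+q}$, this is bounded by the claimed estimate. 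The only delicate point is the confluent limit in the first step, where all $q$ arguments coalesce; once the Wronskian form of the determinant is in hand, the remaining estimates are routine applications of the Jacobi polynomial facts in Lemma \ref{basic}.
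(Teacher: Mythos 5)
Your proof is correct and follows essentially the same route as the paper's: the confluent Vandermonde limit to a Wronskian-type determinant, the iterated derivative identity for Jacobi polynomials, a permutation expansion of the determinant optimized by the rearrangement inequality ($\sigma(j)=q+1-j$), and the lower bound $\prod_{1\le j<k\le q}|n_j(n_j+r)-n_k(n_k+r)|\ge\prod_{j=1}^{q-1}(n_j+r)^{q-j}$. The only (harmless) difference is that you absorb the small-index cases $n_j\le q-1$ into the constant uniformly, whereas the paper splits into the cases $n_j\le q-1$ and $n_j>q-1$; as a result your bound $C\prod_{j=1}^{q}(n_j+1)^{-p+q-1/2}$ is marginally sharper than, and implies, the one stated in the lemma.
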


\begin{proof}
Here we must understand $\phi _{\lambda }(X)$ as 
\begin{equation*}
\lim_{\substack{ x_{i}\rightarrow \cos (2t_{i})  \\ i=1,...,q}}\frac{\det
\left( \widetilde{P_{n_{j}}}(x_{k})\right) _{j,k=1,...,q}}{%
\tprod\limits_{1\leq j<k\leq q}(x_{j}-x_{k})\tprod\limits_{1\leq j<k\leq
q}(n_{j}(n_{j}+r)-n_{k}(n_{k}+r))}
\end{equation*}%
where $x_{i}$ are distinct and not equal to $\cos (2t_{i})$. According to 
\cite{Hua} (see also \cite[Lemma 4.1]{Ho}), this limit is equal to%
\begin{equation}
\phi _{\lambda }(X)=C\frac{\det \left( \widetilde{P_{n_{j}}}^{(k-1)}(\cos
(2t_{1}))\right) _{j,k=1,...,q}}{\tprod\limits_{1\leq j<k\leq
q}(n_{j}(n_{j}+r)-n_{k}(n_{k}+r))}  \label{0}
\end{equation}%
where $C=C(q)=(-1)^{q(q-1)/2}/(q-1)!$ and $\widetilde{P_{n_{j}}}^{(k-1)}$
means the $(k-1)^{\prime }st$ derivative of $\widetilde{P_{n_{j}}}$.
Applying Lemma \ref{basic}(v) repeatedly, we see that 
\begin{equation*}
\widetilde{P_{n}}^{(k-1)}(x)=\frac{2^{-(k-1)}(n+p-q+1)\cdot \cdot \cdot
(n+p-q+k-1)P_{n-(k-1)}^{(p-q+k-1,k-1)}(x)}{P_{n}^{(p-q,0)}(1)},
\end{equation*}%
where we recall that $P_{m}^{(a,b)}(x)=0$ if $m\leq 0$. So the main task is
to bound%
\begin{equation*}
\left\vert \det \left( (n_{j}+p-q+1)\cdot \cdot \cdot
(n_{j}+p-q+k-1)P_{n_{j}-(k-1)}^{(p-q+k-1,k-1)}(\cos (2t_{1})\right)
_{j,k}\right\vert .
\end{equation*}

Using the permutation method for taking derivatives, one can easily see this
determinant is bounded in modulus by 
\begin{equation}
q!\max_{\pi }\left\vert \prod_{j=1}^{q}(n_{j}+p-q+1)\cdot \cdot \cdot
(n_{j}+p-q+\pi (j)-1)P_{n_{j}-(\pi (j)-1)}^{(p-q+\pi (j)-1,\pi (j)-1)}(\cos
(2t_{1}))\right\vert  \label{1}
\end{equation}%
where the maximum is taken over all permutations $\pi $ of $\{1,...,q\}$.

If all $n_{j}\leq q-1$, this expression is clearly bounded (independently of
the choice $(n_{1},...,n_{q})$), so assume otherwise, say%
\begin{equation*}
n_{q}<\cdot \cdot \cdot <n_{s}\leq q-1<n_{s-1}<\cdot \cdot \cdot <n_{1},
\end{equation*}%
for some $s=1,...,q$. Put $s=q+1$ if all $n_{j}>q-1$.

Now, $j\leq s-1$ implies $n_{j}>q-1\geq \pi (j)-1$ and thus $n_{j}-(\pi
(j)-1)\geq (n_{j}+1)/(q+1)$. Since $\cos (2t_{1})\neq \pm 1,$ Lemma \ref%
{basic}(ii) together with this observation yields the bound 
\begin{equation*}
\left\vert P_{n_{j}-(\pi (j)-1)}^{(p-q+\pi (j)-1,\pi (j)-1)}(\cos
2t_{1})\right\vert \leq \frac{C(t_{1},p,q)}{\sqrt{n_{j}+1}}
\end{equation*}%
for these $j$. Moreover, $n_{j}+p-q+\pi (j)-1\leq C_{p,q}(n_{j}+1)$ for
these $j$.

Since always $P_{n}^{(a,b)}(\cos 2t_{1})$ is uniformly bounded over $n$
(with constant depending on $t_{1},a,b)$ it follows that the terms 
\begin{equation*}
(n_{j}+p-q+1)\cdot \cdot \cdot (n_{j}+p-q+\pi (j)-1)P_{n_{j}-(\pi
(j)-1)}^{(p-q+\pi (j)-1,\pi (j)-1)}(\cos (2t_{1}))
\end{equation*}%
are uniformly bounded when $n_{j}\leq q-1$. Consequently, the expression in (%
\ref{1}) for any fixed $\pi $ is bounded by%
\begin{equation*}
C\prod_{j=1}^{s-1}\frac{(n_{j}+p-q+1)\cdot \cdot \cdot (n_{j}+p-q+\pi (j)-1)%
}{\sqrt{n_{j}+1}}\leq C\prod_{j=1}^{s-1}(n_{j}+1)^{\pi (j)-1-1/2}
\end{equation*}%
where $C$ does not depend on the choice of $n_{j}$.

Since the terms $n_{j}$ are strictly increasing, this is maximized with the
permutation $\pi $ that maps $j$ to $q-j+1$. Hence 
\begin{equation*}
(\ref{1})\leq C\prod_{j=1}^{s-1}(n_{j}+1)^{q-j-1/2}.
\end{equation*}

Recalling the formula (\ref{0}), we have 
\begin{eqnarray*}
\left\vert \phi _{\lambda }(X)\right\vert &\leq &\frac{%
C\prod_{j=1}^{s-1}(n_{j}+1)^{q-j-1/2}}{\tprod\limits_{1\leq j<k\leq
q}(n_{j}(n_{j}+r)-n_{k}(n_{k}+r))\prod_{j=1}^{q}P_{n_{j}}^{(p-q,0)}(1)} \\
&\leq &\frac{C\prod_{j=1}^{s-1}(n_{j}+1)^{q-j-1/2}}{%
\prod_{j=1}^{q-1}(n_{j}+1)^{q-j}\prod_{j=1}^{q}(n_{j}+1)^{p-q}}=\frac{%
C\prod_{j=1}^{s-1}(n_{j}+1)^{q-j-1/2}}{\prod_{j=1}^{q}(n_{j}+1)^{p-j}}.
\end{eqnarray*}

When $s=q+1$ (all $n_{j}>q-1$), this simplifies to $%
C\prod_{j=1}^{q}(n_{j}+1)^{-p+q-1/2},$ which is even smaller than the valued
stated in the lemma. Otherwise, $s-1\leq q-1$ and for $j\leq q-1$, $%
q-j-1/2>0,$ thus%
\begin{equation*}
\prod_{j=1}^{s-1}(n_{j}+1)^{q-j-1/2}\leq
\prod_{j=1}^{q-1}(n_{j}+1)^{q-j-1/2}.
\end{equation*}%
We immediately deduce that $\phi _{\lambda }(X)$ is bounded as stated in the
lemma.
\end{proof}

\begin{lemma}
\label{e1}Suppose $X=(t_{1},...,t_{q})$ where $\cos (2t_{j})=-1$ for all $j$%
. Then 
\begin{equation*}
\left\vert \phi _{\lambda }(X)\right\vert \leq
C\tprod\limits_{j=1}^{q}(n_{j}+r)^{-p+q}.
\end{equation*}
\end{lemma}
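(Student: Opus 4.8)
The plan is to follow the strategy of Lemma \ref{2e1}, since this is again a fully confluent case, only now at the endpoint $x=-1$ rather than at an interior point. Because all $\cos(2t_{j})=-1$ coincide, formula (\ref{formula}) must again be read through the limiting process, and exactly as in (\ref{0}) we obtain
\[
\phi _{\lambda }(X)=C\frac{\det \left( \widetilde{P_{n_{j}}}^{(k-1)}(-1)\right) _{j,k=1}^{q}}{\prod_{1\leq j<k\leq q}(n_{j}(n_{j}+r)-n_{k}(n_{k}+r))}.
\]
So the whole task reduces to estimating the confluent determinant in the numerator.

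First I would compute the entries explicitly. Applying Lemma \ref{basic}(v) repeatedly, as in Lemma \ref{2e1}, gives
\[
\widetilde{P_{n}}^{(k-1)}(-1)=\frac{2^{-(k-1)}(n+p-q+1)\cdots (n+p-q+k-1)\,P_{n-(k-1)}^{(p-q+k-1,k-1)}(-1)}{P_{n}^{(p-q,0)}(1)},
\]
but now, in contrast to the interior case, I would evaluate the Jacobi polynomial at the endpoint using Lemma \ref{basic}(i): $P_{n-(k-1)}^{(p-q+k-1,k-1)}(-1)=(-1)^{n-k+1}\binom{n}{k-1}$ and $P_{n}^{(p-q,0)}(1)=\binom{n+p-q}{n}$. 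Factoring the scalar $(-1)^{n_{j}}/\binom{n_{j}+p-q}{n_{j}}$ out of row $j$ (in modulus this is $\leq C(n_{j}+1)^{-(p-q)}$, since $\binom{n+p-q}{n}\geq c(n+1)^{p-q}$) and pulling the $\lambda$-independent constants out of each column, the numerator becomes, up to a constant, $\prod_{j}(n_{j}+1)^{-(p-q)}$ times $\det (p_{k}(n_{j}))_{j,k}$, where $p_{k}(x)=\prod_{i=1}^{k-1}(x+p-q+i)\binom{x}{k-1}$ is a polynomial in $x$ of degree exactly $2(k-1)$ with leading coefficient $1/(k-1)!$.

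The heart of the argument, and the step I expect to be the main obstacle, is to prove that $|\det (p_{k}(n_{j}))|\leq C\prod_{1\leq j<k\leq q}(n_{j}^{2}-n_{k}^{2})$. The crude permutation bound used in Lemma \ref{2e1} is far too lossy here: at the endpoint the entries grow like $(n_{j}+1)^{2(k-1)-(p-q)}$ rather than decaying, so one must exploit cancellation. Since the columns are the polynomials $p_{1},\dots ,p_{q}$ of degrees $0,2,\dots ,2(q-1)$, the top-degree part of the determinant is $\prod_{k}\tfrac{1}{(k-1)!}\det \big( (n_{j}^{2})^{k-1}\big) =C\prod_{j<k}(n_{j}^{2}-n_{k}^{2})$, a Vandermonde in the $n_{j}^{2}$. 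I would control the lower-order terms by expanding $\det (p_{k}(n_{j}))$ multilinearly over the monomials of the $p_{k}$: every nonzero term is a generalized Vandermonde, equal to $\prod_{j<k}(n_{j}-n_{k})$ times a Schur polynomial $s_{\mu }(n)$, and the fact that column $k$ only contributes exponents $\leq 2(k-1)$ forces $\mu $ to be contained in the partition of the top term. Hence each $s_{\mu }(n)$ is bounded by a constant multiple of its dominant monomial, and so by $C\prod_{j<k}(n_{j}+n_{k})$, on the relevant range $n_{1}>\cdots >n_{q}\geq 0$. Summing the finitely many terms yields the claimed determinant bound.

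Finally I would assemble the pieces. The denominator factors as $\prod_{j<k}(n_{j}-n_{k})(n_{j}+n_{k}+r)$, so the Vandermonde bound produces the cancellation
\[
\frac{\prod_{j<k}(n_{j}^{2}-n_{k}^{2})}{\prod_{j<k}(n_{j}(n_{j}+r)-n_{k}(n_{k}+r))}=\prod_{1\leq j<k\leq q}\frac{n_{j}+n_{k}}{n_{j}+n_{k}+r}\leq 1.
\]
Combining this with the row factors $\prod_{j}(n_{j}+1)^{-(p-q)}$ and using $n_{j}+1\sim n_{j}+r$ gives $|\phi _{\lambda }(X)|\leq C\prod_{j=1}^{q}(n_{j}+r)^{-p+q}$, as claimed. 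The one point demanding vigilance throughout is that every constant $C$ produced---from the binomial asymptotics, the column constants, and the Schur-polynomial estimate---depends only on $p,q$ and not on $\lambda $.
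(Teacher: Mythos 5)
Your proposal is correct and reaches the stated bound, but it handles the crux of the lemma --- the confluent determinant $\det\bigl(\widetilde{P_{n_{j}}}^{(k-1)}(-1)\bigr)$ --- by a genuinely different argument than the paper. Both proofs begin identically: pass to the limit as in Lemma \ref{2e1}, evaluate the derivatives at $-1$ via Lemma \ref{basic}(i) and (v), and strip off the row factors $P_{n_{j}}^{(p-q,0)}(1)\sim (n_{j}+1)^{p-q}$, leaving a determinant with polynomial entries in the $n_{j}$. At that point the paper observes that the $(j,k)$ entry can be rewritten as $\prod_{i=0}^{k-2}\bigl(n_{j}(n_{j}+r)-i(r+i)\bigr)$, i.e.\ a monic polynomial of degree $k-1$ in the single variable $x_{j}=n_{j}(n_{j}+r)$; hence the determinant, as a polynomial in $(x_{1},\ldots,x_{q})$, vanishes whenever two coordinates coincide and, by degree count, equals $c\prod_{i<j}(x_{i}-x_{j})$ exactly, which cancels the denominator $\prod_{j<k}(n_{j}(n_{j}+r)-n_{k}(n_{k}+r))$ on the nose. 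You instead treat the entries as degree-$2(k-1)$ polynomials in $n_{j}$ itself and prove only the upper bound $|\det(p_{k}(n_{j}))|\leq C\prod_{j<k}(n_{j}^{2}-n_{k}^{2})$ by expanding into generalized Vandermondes, writing each as $\prod_{j<k}(n_{j}-n_{k})\,s_{\mu }(n)$ with $\mu $ contained in the staircase partition, and bounding the Schur polynomials by their dominant monomials; the final cancellation $\prod_{j<k}(n_{j}+n_{k})/(n_{j}+n_{k}+r)\leq 1$ then closes the argument. Your containment claim for $\mu $ does check out (the $i$ largest exponents come from $i$ distinct columns, one of index at most $q-i+1$), as does the monomial domination. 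What the paper's route buys is an exact two-line evaluation with no symmetric-function machinery; what yours buys is a scheme that does not depend on the happy accident that the entries collapse to polynomials in $n_{j}(n_{j}+r)$, and so would survive in settings lacking that structure. If you write it up, the one step to spell out carefully is the inequality $s_{\mu }(n)\leq C\,n^{\mu }$ on the cone $n_{1}>\cdots >n_{q}\geq 0$, which requires the dominance-order comparison of monomials together with the observation that $\mu _{q}=0$ here (so nothing degenerates when $n_{q}=0$).
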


\begin{remark}
We note that when $p=q,$ $\exp iX\in N_{G_{c}}(K)$ for such $X$.
\end{remark}

\begin{proof}
Similar arguments to the previous proof show that%
\begin{eqnarray*}
\phi _{\lambda }(X) &=&\lim_{\substack{ x_{i}\rightarrow -1  \\ x_{i}\text{
distinct}}}\frac{\det \left( \widetilde{P_{n_{j}}}(x_{k})\right)
_{j,k=1,...,q}}{\tprod\limits_{1\leq j<k\leq
q}(x_{j}-x_{k})\tprod\limits_{1\leq j<k\leq q}(n_{j}(n_{j}+r)-n_{k}(n_{k}+r))%
} \\
&=&C\frac{\det \left( \widetilde{P_{n_{j}}}^{(k-1)}(-1)\right) _{j,k=1,...,q}%
}{\tprod\limits_{1\leq j<k\leq q}(n_{j}(n_{j}+r)-n_{k}(n_{k}+r))}
\end{eqnarray*}

In this situation, bounding the determinant by a multiple of the largest
term in the sum that arises from the permutation method for calculating the
determinant will not give us a good enough estimate. We will actually
directly compute the determinant, instead. As before, we have%
\begin{eqnarray*}
&&\det \left( \widetilde{P_{n_{j}}}^{(k-1)}(-1)\right) \\
&=&\frac{\det (c_{k}(n_{j}+p-q+1)\cdot \cdot \cdot
(n_{j}+p-q+k-1)P_{n_{j}-(k-1)}^{(p-q+k-1,k-1)}(-1))_{j,k}}{%
\prod_{j=1}^{q}P_{n_{j}}^{(p-q,0)}(1)}.
\end{eqnarray*}%
From Lemma \ref{basic}(i),%
\begin{eqnarray*}
P_{n_{j}-(k-1)}^{(p-q+k-1,k-1)}(-1) &=&(-1)^{n_{j}-(k-1)}\binom{n_{j}}{%
n_{j}-(k-1)} \\
&=&\frac{(-1)^{n_{j}-(k-1)}n_{j}\cdot \cdot \cdot (n_{j}-(k-1)+1)}{(k-1)!},
\end{eqnarray*}%
thus we need to evaluate the determinant of the matrix whose $j,k$ entry is
given by%
\begin{equation*}
c_{k}^{\prime }(n_{j}+p-q+1)\cdot \cdot \cdot
(n_{j}+p-q+k-1)(-1)^{n_{j}-(k-1)}n_{j}\cdot \cdot \cdot (n_{j}-(k-1)+1).
\end{equation*}%
In other words, we want to find 
\begin{equation*}
\det \left( (-1)^{n_{1}+\cdot \cdot \cdot
+n_{q}}\prod_{k=1}^{q}c_{k}^{\prime }(-1)^{k-1}M_{j,k}\right) _{j,k}
\end{equation*}%
where $M_{j,1}=1$ and for $k>1,$ 
\begin{eqnarray*}
M_{j,k} &=&\prod_{i=0}^{k-2}(n_{j}+p-q+1+i)(n_{j}-i) \\
&=&\prod_{i=0}^{k-2}(n_{j}+r+i)(n_{j}-i)=%
\prod_{i=0}^{k-2}(n_{j}(n_{j}+r)-i(r+i)).
\end{eqnarray*}

Put $M_{j,1}(x)=1$ and $M_{j,k}(x)=\prod_{\ell =0}^{k-2}(x_{j}-\ell (r+\ell
))$ and consider the multi-variable polynomial 
\begin{equation*}
F(x_{1},...,x_{q})=\det (M_{j,k}(x))_{j,k}.
\end{equation*}%
Our desire is to compute $F(x)$ at $x=(x_{j})_{j=1}^{q}$ with $%
x_{j}=n_{j}(n_{j}+r)$. Notice that if some coordinates $x_{i}=x_{j}$ for $%
i\neq j,$ then the matrix $(M_{j,k}(x))_{j,k}$ has two identical rows and
therefore $F(x)=0$. Thus the polynomial $P(x)=\prod_{1\leq i<j\leq
q}(x_{i}-x_{j})$ divides $F(x)$. Since both $F$ and $P$ are degree $q(q-1)$
polynomials, it must be that there is a constant $c$ such that for all $x$, 
\begin{equation*}
F(x)=c\prod_{1\leq i<j\leq q}(x_{i}-x_{j}).
\end{equation*}%
In particular, 
\begin{eqnarray*}
F(n_{1}(n_{1}+r),...,n_{q}(n_{q}+r)) &=&\det (M_{j,k})_{j,k} \\
&=&c\prod_{1\leq j<k\leq q}(n_{j}(n_{j}+r)-n_{k}(n_{k}+r)).
\end{eqnarray*}%
Hence, 
\begin{eqnarray*}
\phi _{\lambda }(X) &=&\frac{C\det (M_{j,k})_{j,k}}{%
\prod_{j=1}^{q}P_{n_{j}}^{(p-q,0)}(1)\prod_{1\leq j<k\leq
q}(n_{j}(n_{j}+r)-n_{k}(n_{k}+r))} \\
&\leq &C\prod_{j=1}^{q}(n_{j}+1)^{-p+q}.
\end{eqnarray*}
\end{proof}

We are now ready to complete the proof of the theorem for arbitrary $X$.

\begin{proof}
\lbrack of Theorem (i)] Let $\exp iX\in A\diagdown N_{G_{c}}(K)$ with $%
X=X(t_{1},...,t_{q})$.

First, suppose there is a pair $j,k$ such that $t_{j}\neq \pm t_{k}\func{mod}%
\pi $. In this case, $\cos (2t_{j})\neq \cos (2t_{k})$. Lemmas \ref{e1+e2}
or \ref{e1+e2(2)} (depending on the situation) show that for such $X,$ $\phi
_{\lambda }(X)$ is bounded in modulus by $C\prod_{j=1}^{q-1}(n_{j}+1)^{-1}$.

So we can assume that for every $j,k,$ either $t_{j}\equiv t_{k}\func{mod}%
\pi $ or $t_{j}\equiv -t_{k}\func{mod}\pi $ (or both). If there is some
\thinspace $j$ such that $t_{j}\equiv 0\func{mod}\pi ,$ then this will be
true for all $t_{k}$. Then $\alpha (X)\equiv 0\func{mod}\pi $ for all
restricted roots $\alpha $ and that contradicts the assumption that $\exp
iX\notin N_{G_{c}}(K)$.

If some $t_{j}\equiv \pi /2\func{mod}\pi ,$ then the same is true for all $%
t_{k}$ and then $\cos (2t_{j})=-1$ for all $j$. In the case that $p=q$, as
the positive restricted roots are only of the form $\alpha =e_{j}\pm e_{k}$
and $2e_{j}$, we again have $\alpha (X)\equiv 0\func{mod}\pi $ for all $%
\alpha $, giving a contradiction. If $p>q,$ Lemma \ref{e1} gives the bound 
\begin{equation*}
\left\vert \phi _{\lambda }(X)\right\vert \leq
C\tprod\limits_{j=1}^{q}(n_{j}+r)^{-p+q}\leq
C\tprod\limits_{j=1}^{q}(n_{j}+r)^{-1}.
\end{equation*}

Otherwise we must have $t_{j}\neq 0,\pi /2\func{mod}\pi $ for any $j$, but $%
\cos (2t_{j})=\cos (2t_{k})$ for all $j,k,$ and then Lemma \ref{2e1} says
that $\left\vert \phi _{\lambda }(X)\right\vert \leq
C\prod_{j=1}^{q-1}(n_{j}+1)^{-1/2}$ when $p=q$ and 
\begin{equation*}
\left\vert \phi _{\lambda }(X)\right\vert \leq
C\prod_{j=1}^{q-1}(n_{j}+1)^{-3/2}(n_{q}+1)^{-1}\leq
C\prod_{j=1}^{q}(n_{j}+1)^{-1}
\end{equation*}%
for $p>q$. That completes the proof.
\end{proof}

\section{$L^{2}$ and other smoothness results}

With these decay estimates we can determine when convolution powers of
orbital measures have square integrable density functions.

\begin{theorem}
\label{main} Let $a\in A\diagdown $ $N_{G_{c}}(K)$.

(i) Then $\mu _{a}^{k}\in L^{2}$ provided $k>\max (p,2(p-q)+3)$ if $p>q$ or $%
k>\max (2p,6)$ if $p=q$.

(ii) If $a$ is regular, then $\mu _{a}^{2}\in L^{2}$ when $p>q$ and $\mu
_{a}^{3}\in L^{2}$ when $p=q$.
\end{theorem}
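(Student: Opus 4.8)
The plan is to convert the pointwise decay estimates of Theorem \ref{decay} into an $L^2$ statement via the Peter--Weyl / Plancherel theorem. Since $\mu_a$ is $K$-bi-invariant, its expansion in terms of spherical functions reads $\mu_a = \sum_{\lambda \in \Lambda_{sph}} d_\lambda \, \overline{\phi_\lambda(a)} \, \phi_\lambda$, where $d_\lambda = \dim V_\lambda$ is the degree of the corresponding spherical representation. Convolution diagonalizes this expansion, so $\mu_a^k$ has spherical coefficients proportional to $d_\lambda \, \phi_\lambda(a)^k$, and by orthogonality of the (suitably normalized) spherical functions the $L^2$-norm satisfies
\begin{equation*}
\left\Vert \mu_a^k \right\Vert_2^2 \sim \sum_{\lambda \in \Lambda_{sph}} d_\lambda \left\vert \phi_\lambda(a) \right\vert^{2k}.
\end{equation*}
Thus $\mu_a^k \in L^2$ precisely when this series converges, and the whole problem reduces to comparing the growth of $d_\lambda$ against the decay of $\left\vert \phi_\lambda(a)\right\vert^{2k}$ supplied by Theorem \ref{decay}.

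The next step is to record the degree $d_\lambda$ as a polynomial in the $n_j = m_j + q - j$. Using the Weyl dimension formula (or the known product formula for the degrees of the spherical representations of $SU(p+q)/S(U(p)\times U(q))$), one obtains an expression of the form $d_\lambda \sim \prod_{j<k}(n_j^2 - n_k^2) \prod_j (n_j+1)^{2(p-q)+1}$, up to constants independent of $\lambda$; I would extract from this the crude but sufficient bound $d_\lambda \leq C \prod_{j=1}^q (n_j+1)^{2q-1+2(p-q)} = C\prod_{j=1}^q (n_j+1)^{2p-1}$, or keep the sharper factor-by-factor estimate when the margin is tight. With this in hand, part (i) follows by inserting the general decay bound of Theorem \ref{decay}(i): for $p>q$ each of the $q-1$ leading factors contributes $(n_j+1)^{-k}$ while $d_\lambda$ contributes at most $(n_j+1)^{2p-1}$, so convergence of the multiple series is governed by requiring each exponent to drop below $-1$, i.e. $k$ large enough that $2p-1-k < -1$ together with an extra estimate controlling the single undamped variable $n_q$; tracking the $n_q$-exponent carefully is what produces the two-sided condition $k > \max(p, 2(p-q)+3)$. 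The $p=q$ case is analogous but starts from the weaker $-1/2$ decay, accounting for the larger threshold $k > \max(2p,6)$.

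For part (ii) the same series criterion applies, but now I feed in the much stronger regular estimate $\left\vert \phi_\lambda(a)\right\vert \leq C\prod_{j=1}^q (n_j+1)^{-p+j-1/2}$, which damps \emph{every} coordinate (including $n_q$) and with an exponent growing in $p$. Raising this to the power $2k$ and multiplying by $d_\lambda \leq C\prod_j (n_j+1)^{2p-1}$, convergence of $\sum_\lambda \prod_j (n_j+1)^{2p-1 - 2k(p-j+1/2)}$ over $n_1 > \cdots > n_q \geq 0$ reduces to checking that each exponent is $< -1$; the binding constraint comes from the least-damped coordinate $j=q$, and a direct computation shows $k=2$ suffices when $p>q$ while $k=3$ is needed when $p=q$, matching the stated conclusion.

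The main obstacle I anticipate is not the convergence bookkeeping itself but getting the degree formula $d_\lambda$ expressed in the $n_j$ variables with exponents matched precisely enough to the decay rates, since the thresholds in (i) are genuinely two-sided and a sloppy bound on $d_\lambda$ (or on the undamped $n_q$-factor) would give a worse exponent than claimed. I would therefore be careful to treat the $n_q$ summation separately from the strictly-dominated variables $n_1,\dots,n_{q-1}$, and to use the sharper per-factor dimension estimate rather than the crude uniform power $2p-1$ wherever the inequality $k > \max(p, 2(p-q)+3)$ is tight.
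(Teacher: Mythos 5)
Your overall strategy coincides with the paper's: both compute $\left\Vert \mu _{a}^{k}\right\Vert _{2}^{2}=\sum_{\lambda \in \Lambda _{sph}}d_{\lambda }\left\vert \phi _{\lambda }(a)\right\vert ^{2k}$, bound $d_{\lambda }$ via the Weyl degree formula in the variables $n_{j}$, insert the decay estimates of Theorem \ref{decay}, and handle the undamped coordinate $n_{q}$ in part (i) by a nested summation. The paper's degree estimate is precisely the per-factor bound you allude to, $d_{\lambda }\sim \prod_{j=1}^{q}(n_{j}+1)^{2p-2j+1}$, and your outline of part (i) reproduces its argument.

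The one concrete problem is in your part (ii) computation, where you substitute the crude uniform bound $d_{\lambda }\leq C\prod_{j}(n_{j}+1)^{2p-1}$ and claim that convergence of $\sum_{\lambda }\prod_{j}(n_{j}+1)^{2p-1-2k(p-j+1/2)}$ follows for $k=2$ when $p>q$. It does not in general: at the binding coordinate $j=q$ the exponent is $2p-1-4(p-q+1/2)=-2p+4q-3$, which is $\geq -1$ whenever $p\leq 2q-1$ (for instance $p=3$, $q=2$ gives exactly $-1$), so the crude bound only delivers $k=2$ when $p\geq 2q$. The fix is the one you anticipate in your final paragraph but apply only to part (i): use the per-factor degree bound throughout. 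With $d_{\lambda }\leq C\prod_{j}(n_{j}+1)^{2p-2j+1}$ the exponent at coordinate $j$ becomes $(2p-2j+1)-2k(p-j+1/2)=(1-k)(2p-2j+1)$; for $k=2$ this equals $-(2p-2j+1)\leq -(2p-2q+1)\leq -3<-1$ for every $j$ when $p>q$, while for $p=q$ it equals $-1$ at $j=q$, forcing $k=3$ --- exactly the stated thresholds. With that correction your argument is the paper's.
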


\begin{proof}
We will use the Weyl character formula, which in this setting tells that 
\begin{equation*}
\left\Vert \mu _{a}^{k}\right\Vert _{2}^{2}=\sum_{\lambda }d_{\lambda
}Tr\left\vert \widehat{\mu _{a}}(\lambda \right\vert ^{2k}=\sum_{\lambda \in
\Lambda _{sph}}d_{\lambda }\left\vert \phi _{\lambda }(a)\right\vert ^{2k},
\end{equation*}%
c.f., \cite{AGP}. As we already have estimates on the rate of decay of the
spherical functions, the key additional idea needed to prove this result is
a bound on the growth in the degree of $\lambda $ as a function of $%
(n_{1},...,n_{q})$. This uses the Weyl degree formula which states%
\begin{equation*}
\deg \lambda =C\tprod\limits_{\alpha }<\alpha ,\lambda +\rho >
\end{equation*}%
where $\rho $ is half the sum of the positive roots. Here 
\begin{equation*}
\rho =\sum_{j=1}^{q}\left( \frac{r}{2}+q-j\right) 2e_{j},
\end{equation*}%
so 
\begin{equation*}
\lambda +\rho =\sum_{j=1}^{q}\left( m_{j}+q-j+\frac{r}{2}\right)
2e_{j}=\sum_{j=1}^{q}\left( n_{j}+\frac{r}{2}\right) 2e_{j}\text{.}
\end{equation*}%
Thus (a) $<e_{j}+e_{i},\lambda +\rho >$ $\geq n_{j}+n_{i}+r\geq n_{j}+1$ if $%
j<i$ (and there are $q-j$ such roots, each with multiplicity $2)$;

(b) $<e_{j}-e_{i},\lambda +\rho >$ $\geq n_{j}-n_{i}\geq 1$ if $j<i$;

(c) $<(2)e_{j},\lambda +\rho >$ $\geq n_{j}+1$ (with multiplicity $1$ for
the roots $2e_{j}$ and multiplicity $2(p-q)$ for the roots $e_{j}$).

Consequently, 
\begin{equation}
\deg \lambda \sim
\tprod\limits_{j=1}^{q}(n_{j}+1)^{2(q-j)+1+2(p-q)}=\tprod%
\limits_{j=1}^{q}(n_{j}+1)^{2p-2j+1}.  \label{deg}
\end{equation}%
Thus if $p>q,$ Theorem \ref{decay}(i) and the degree formula (\ref{deg})
tells us that%
\begin{eqnarray*}
\left\Vert \mu _{a}^{k}\right\Vert _{2}^{2} &\leq &C\sum_{\lambda
}\tprod\limits_{j=1}^{q}(n_{j}+1)^{2p-2j+1}\tprod%
\limits_{j=1}^{q-1}(n_{j}+1)^{-2k} \\
&\leq &C\sum_{n_{1}>n_{2}>\cdot \cdot \cdot >n_{q-1}\geq
1}\tprod\limits_{j=1}^{q-1}(n_{j}+1)^{2p-2j+1-2k}%
\sum_{n_{q}<n_{q-1}}(n_{q}+1)^{1+2(p-q)}.
\end{eqnarray*}%
Since $\sum_{n_{q}<n_{q-1}}(n_{q}+1)^{t}\leq C(n_{q-1}+1)^{t+1}$ when $t\geq
0$, it follows that 
\begin{equation*}
\left\Vert \mu _{a}^{k}\right\Vert _{2}^{2}\leq \sum_{n_{1}>n_{2}>\cdot
\cdot \cdot >n_{q-1}\geq
1}\tprod\limits_{j=1}^{q-2}(n_{j}+1)^{2p-2j+1-2k}(n_{q-1}+1)^{4p-4q-2k+5}.
\end{equation*}%
This sum is clearly finite if $2p-2j+1-2k<-1$ for all $j=1,...,q-2$
(equivalently, $p<k$) and $4p-4q-2k+5<-1$ (i.e., $k>2(p-q)+3$), as we
desired to show.

Similar reasoning using the other formula from Theorem \ref{decay}(i) for
the case $p=q$ or the formula from part (ii) in the regular case, gives the
other statements.
\end{proof}

\begin{remark}
We remark that the conclusion $\mu _{a}^{2}\in L^{2}$ for $a$ regular is
sharp since $\mu _{a}$ is singular. Previously, it was shown in \cite{AA}
that when $a$ is regular $\mu _{a}^{k}\in L^{2}$ if 
\begin{equation}
k>\frac{1+\binom{p+q}{2}}{2p-q}.  \label{AA}
\end{equation}
\end{remark}

In \cite{AA}, conditions are given under which the density function of $\mu
_{a}^{k}$ belongs to $C^{d}(G_{c})$. This is done by noting that if $%
H^{s}(G_{c})$ is the Sobolev space of functions in $L^{2}$ with derivatives
up to order $s$ in $L^{2},$ then the Sobolev embedding theorem \cite[Theorem
1.2.1]{Jo}, implies that if $s>d+\dim G_{c}/2,$ then $H^{s}(G_{c})\subseteq
C^{d}(G_{c})$. Moreover, the $H^{s}$ norm can be computed as%
\begin{equation*}
\left\Vert \mu _{a}^{k}\right\Vert _{H^{s}}^{2}=\sum_{\lambda \in \Lambda
_{sph}}d_{\lambda }(1+\kappa _{\lambda })^{s}\left\vert \phi _{\lambda
}(a)\right\vert ^{2k}
\end{equation*}%
where $\kappa _{\lambda }$ is the Casmir constant, $\kappa _{\lambda
}=<\lambda +2\rho ,\lambda >\sim \left\Vert \lambda \right\Vert ^{2}\sim
n_{1}^{2}$. They deduce that $\mu _{a}^{k}\in H^{s}$ for $k>\frac{1+\binom{%
p+q}{2}+2s}{2p-q}$ when $a$ is regular.

With our better estimates on the decay of the spherical functions, we can
obtain the following stronger results.

\begin{proposition}
(i) For any $a\notin N_{G_{c}}(K),$ $\mu _{a}^{k}\in H^{s}$ if $k>\max
(s+p,2(p-q)+3)$ if $p>q$ and $k>\max (2s+2p,6)$ if $p=q$.

(ii) If $a$ is regular, then $\mu _{a}^{k}\in H^{s}$ if $k>(p+s)/(p-1/2)$.
\end{proposition}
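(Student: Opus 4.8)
The plan is to mirror the structure of the proof of Theorem \ref{main}, simply carrying along the extra Casimir factor $(1+\kappa_\lambda)^s$ in the sum that computes the Sobolev norm. Since we are told $\kappa_\lambda \sim \|\lambda\|^2 \sim n_1^2$, the first thing I would do is insert this estimate into the formula
\begin{equation*}
\left\Vert \mu _{a}^{k}\right\Vert _{H^{s}}^{2}=\sum_{\lambda \in \Lambda _{sph}}d_{\lambda }(1+\kappa _{\lambda })^{s}\left\vert \phi _{\lambda }(a)\right\vert ^{2k},
\end{equation*}
replacing $(1+\kappa_\lambda)^s$ by a constant multiple of $(n_1+1)^{2s}$. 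Combined with the Weyl degree formula (\ref{deg}), the summand becomes, up to a constant,
\begin{equation*}
(n_{1}+1)^{2s}\tprod\limits_{j=1}^{q}(n_{j}+1)^{2p-2j+1}\left\vert \phi _{\lambda }(a)\right\vert ^{2k}.
\end{equation*}
The effect of the Sobolev weight is thus only to boost the exponent of the single factor $(n_1+1)$ by $2s$.

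Next I would invoke the spherical function decay bounds from Theorem \ref{decay}. For part (i) with $p>q$, substituting $\left\vert \phi _{\lambda }(a)\right\vert \leq C\prod_{j=1}^{q-1}(n_{j}+1)^{-1}$ gives a summand bounded by $(n_1+1)^{2s}\prod_{j=1}^{q}(n_j+1)^{2p-2j+1}\prod_{j=1}^{q-1}(n_j+1)^{-2k}$. I would then carry out exactly the same reduction as in Theorem \ref{main}: first sum over $n_q$ using $\sum_{n_q<n_{q-1}}(n_q+1)^t\leq C(n_{q-1}+1)^{t+1}$, and then determine convergence of the remaining geometric-type sum over the strictly decreasing indices $n_1>\cdots>n_{q-1}$. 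The condition on the $j=1$ factor now reads $2s+2p-2\cdot 1+1-2k<-1$, i.e.\ $k>s+p$, while the $n_{q-1}$ factor again contributes the condition $k>2(p-q)+3$; the intermediate factors $j=2,\dots,q-2$ impose only $k>p$, which is weaker. This yields $k>\max(s+p,2(p-q)+3)$, as stated. The cases $p=q$ in part (i) and the regular case in part (ii) run identically: for $p=q$ the decay is $\prod_{j=1}^{q-1}(n_j+1)^{-1/2}$, which halves the gain from each convolution and produces the condition $k>2s+2p$ on the leading factor; for the regular case one feeds in the sharper bound $\prod_{j=1}^{q}(n_j+1)^{-p+j-1/2}$ from Theorem \ref{decay}(ii), and the leading-factor exponent $2s+(2p-1)-2k(p-1/2)<-1$ rearranges to $k>(p+s)/(p-1/2)$.

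I do not anticipate a genuine obstacle here, since the argument is a routine modification of the already-completed proof of Theorem \ref{main}. The one point demanding slight care is bookkeeping: because the Casimir weight attaches only to $(n_1+1)$ rather than symmetrically to all factors, I must verify that after the $n_q$-summation and any resulting exponent shifts, the binding convergence condition still comes from the $j=1$ term (giving the $s$-dependent inequality) and from the $n_{q-1}$ term (giving the $s$-independent inequality $2(p-q)+3$ or $6$), and that no intermediate index produces a stronger constraint. Checking that the $j=1$ factor indeed dominates the $s$-contribution — rather than, say, the weight migrating onto $n_{q-1}$ during the reduction — is the only step where I would be deliberate rather than mechanical.
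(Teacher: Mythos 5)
Your proposal is correct and follows exactly the route the paper intends: the paper's entire ``proof'' of this proposition is the single remark that it involves the same calculations as Theorem \ref{main}, and your argument is precisely that calculation with the extra weight $(1+\kappa_\lambda)^s\sim (n_1+1)^{2s}$ attached to the $n_1$ factor. The conditions you extract ($k>s+p$ from the $j=1$ factor, the $s$-independent conditions $2(p-q)+3$ and $6$ from the $n_{q-1}$ factor, and $k>(p+s)/(p-1/2)$ in the regular case) match the statement, and your closing caution about checking that no intermediate index dominates is exactly the right bookkeeping point.
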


The proof involves the same types of calculations as in the proof of the
theorem above.

\begin{remark}
It would be interesting to know if these results are sharp and also to
determine for each $a,$ the minimal exponent $k$ such that $\mu _{a}^{k}\in
L^{2}$ (or $C^{d}).$
\end{remark}

\end{document}